\documentclass{amsart}

\usepackage[colorlinks=true,linkcolor=blue,urlcolor=blue, citecolor=blue]%
{hyperref}
\usepackage{tikz-cd}
\usepackage{graphicx}
\usepackage{pb-diagram}
\usepackage{colordvi}
\usepackage{graphics,eepic}
\usepackage[all]{xy}
\usepackage{mathrsfs}
\usepackage[centertags]{amsmath}
\usepackage{amsfonts}
\usepackage{amssymb}
\usepackage{amsthm}

\vfuzz2pt 
\hfuzz2pt 

\newtheorem{thm}{Theorem}
\newtheorem*{thm*}{Theorem}
\newtheorem{cor}{Corollary}
\newtheorem{lem}[cor]{Lemma}

\theoremstyle{definition}

\theoremstyle{remark}
\newtheorem{rem}[cor]{Remark}

\makeatletter
\newtheoremstyle{indented}
  {}
  {}
  {\addtolength{\@totalleftmargin}{1.5em}
   \addtolength{\linewidth}{-3.5em}
   \parshape 1 2.5em \linewidth}
  {}
  {\bfseries}
  {.}
  {.5em}
  {}
\makeatother
\theoremstyle{indented}
\newtheorem{example}{Example}[section]

\numberwithin{equation}{section}


\newcommand{\Gcal}{{\mathcal{G}}}

\newcommand{\N}{\mathbb{N}}

\newcommand{\C}{\mathbb{C}}
\newcommand{\R}{\mathbb{R}}

\newcommand{\bi}{\begin{itemize}}
\newcommand{\ei}{\end{itemize}}
\newcommand{\bd}{\begin{description}}
\newcommand{\ed}{\end{description}}
\newcommand{\beq}{\begin{equation}}
\newcommand{\eeq}{\end{equation}}
\newcommand{\beqn}{\begin{eqnarray}}
\newcommand{\eeqn}{\end{eqnarray}}
\newcommand{\beqna}{\begin{eqnarray*}}
\newcommand{\eeqna}{\end{eqnarray*}}

\newcommand{\V}{\mathcal{V}}

\newcommand{\Z}{\mathbb Z}

\def\vol{{\rm vol} }

\def\Id{{\rm Id}}
\def\NJ{{\rm NJ}}

\newcommand{\leftexp}[2]{\,{\vphantom{#2}}^{#1}\!{#2}}
\newcommand{\trnsp}[1]{\leftexp {t}{#1}}
\DeclareMathOperator{\GL}{GL}
\DeclareMathOperator{\OO}{O}
\DeclareMathOperator{\UU}{U}

\DeclareMathOperator{\tr}{tr}
\DeclareMathOperator{\sym}{sym}
\DeclareMathOperator{\diag}{diag}
\newcommand{\I}{{\mathcal {J}}}

\newcommand{\nn}{\nonumber}
\newcommand{\cP}{{\mathcal {P}}}
\newcommand{\F}{{\mathcal {F}}}



\newcommand{\gln}{\GL_n(\R)}
\newcommand{\glnc}{\GL_n(\C)}
\newcommand{\sln}{SL_n(\R)}
\def\Vb{{\mathbb V}}
\def\On{{\OO_n(\R)}}
\def\Ok{{\OO_k(\R)}}
\def\Onk{{\OO_{n-k}(\R)}}
\newcommand{\Ort}[1]{\OO(#1)}
\newcommand{\grass}[2]{\mathbb{G}_{#1,#2}}
\def\G2{\grass{n}{2}}

\def\Hom{\mathrm{Hom}}

\begin{document}

\title[Random and mean exponents for $\gln$]{Random and mean  Lyapunov
exponents for $\gln$}

\author[Armentano]{Diego Armentano }
\address{Centro de Matemática, Facultad de Ciencias, Universidad de la Rep\'ublica, Igu\'a 4225, 11400 Montevideo, Uruguay}
\email{diego@cmat.edu.uy}

\author[Chinta]{Gautam Chinta}
\address{Department of Mathematics, The City College of New York, New York, NY 10031,}
\email{gchinta@ccny.cuny.edu}

\author[Sahi]{Siddhartha Sahi}
\address{Department of Mathematics, Rutgers University, Hill Center - Busch
Campus, 110 Frelinghuysen Road Piscataway, NJ 08854-8019, USA}
\email{sahi@math.rugers.edu}
\thanks{The research of S. Sahi was partially supported by NSF grants
DMS-1939600 and 2001537, and Simons Foundation grant 509766.}

\author[Shub]{Michael Shub}
\address{Department of Mathematics, The City College and the Graduate
Center of CUNY New York.}
\email{shub.michael@gmail.com}
\thanks{Shub's research was partially supported by a grant from the
Smale Institute.}



\begin{abstract}
We consider orthogonally invariant probability measures on $\gln$ and
  compare the mean of the logs of the moduli of eigenvalues of the
  matrices to the Lyapunov exponents of random matrix products
  independently drawn with respect to the measure. We give a lower bound
  for the former in terms of the latter. The results are motivated by
  Dedieu-Shub\cite{DS}. A novel feature of our treatment is the use of the theory of spherical polynomials in the proof of our main result.
\end{abstract}

\maketitle

\section{Introduction and main result}

In this paper we investigate bounds for the \emph{mean Lyapunov exponents} for a measure on $\gln$ in terms of \emph{random Lyapunov exponents}.  To explain this further, fix a probability measure $\mu$ on $G=\gln$ or $\glnc$.  If $\mu$ satisfies a mild integrability condition, Oseledets theorem guarantees the existence of $n$ 
real numbers 
$$r_1\geq r_2\geq \cdots \geq r_n
$$
such that for almost every sequence $g_1, g_2, \ldots$ of i.i.d. matrices drawn from the measure $\mu$, the limit
\begin{equation}
  \label{eq:limit}
\lim\frac 1m \log\|g_m\cdots g_1v\|  
\end{equation}
exists for every nonzero vector $v$ and is equal to one of the $r_i$.  We call these $r_i$ the random Lyapunov exponents associated to the measure $\mu$.
If the measure $\mu$ is concentrated on a single matrix $A\in G$ the $r_i$ are simply 
$$\log|\lambda_1(A)|\geq \log|\lambda_2(A)|\geq \cdots \geq \log|\lambda_n(A)|
$$
for $\lambda_i(A)$ the eigenvalues of $A$ written according to their algebraic multiplicity.

In the complex case the main Theorem of Dedieu and Shub \cite{DS} is:
\begin{thm*}[Theorem 1, \cite{DS}]
 If $\mu$ is a unitarily invariant measure on $\glnc$, satisfying  the
  integrability condition 
  $$
  A\in\glnc\mapsto \log^{+}(\|A\|)\mbox{ and }
  \log^{+}(\|A^{-1}\|)\quad\mbox{are}\quad\mu\mbox{-integrable},
  $$
  then
  $$
  \int_{A\in\glnc} \sum_{i=1}^k\log|\lambda_i(A)|\,d\mu(A)\geq
  \sum_{i=1}^k r_i.
  $$
\end{thm*}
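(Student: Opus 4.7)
The plan is to reduce the statement to the case $k=1$ by passing to the $k$-th exterior power. Writing $\Lambda^{k}A$ for the induced action on $\bigwedge^{k}\C^{n}$, one has $\sum_{i=1}^{k}\log|\lambda_{i}(A)| = \log|\lambda_{1}(\Lambda^{k}A)|$, and $\sum_{i=1}^{k}r_{i}$ is the top Lyapunov exponent of the pushforward measure $(\Lambda^{k})_{*}\mu$. The pushforward is invariant under $\Lambda^{k}U(n)$, a subgroup of $U(\bigwedge^{k}\C^{n})$ that supplies exactly the invariance the proof will use, and the bound $\|\Lambda^{k}A\|\le\|A\|^{k}$ (and its inverse analogue) transfers the two-sided integrability hypothesis. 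The boundary case $k=n$ is the equality $\int\log|\det A|\,d\mu = \sum_{i}r_{i}$, which serves as a useful sanity check.

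For $k=1$ I would recast both sides as infima of sub-additive sequences. Gelfand's formula $\log|\lambda_{1}(A)| = \lim_{m}m^{-1}\log\|A^{m}\|$, combined with dominated convergence (using integrability of $\log\|A\|$ and $\log\|A^{-1}\|$) and Fekete's lemma applied to the sub-additive sequence $m\mapsto\int\log\|A^{m}\|\,d\mu$, gives
\begin{equation*}
\int\log|\lambda_{1}(A)|\,d\mu(A) \;=\; \inf_{m\ge 1}\tfrac{1}{m}\int\log\|A^{m}\|\,d\mu(A),
\end{equation*}
while the Furstenberg--Kesten theorem yields
\begin{equation*}
r_{1} \;=\; \inf_{m\ge 1}\tfrac{1}{m}\int\log\|g_{m}\cdots g_{1}\|\,d\mu^{\otimes m}.
\end{equation*}
It therefore suffices to prove, for each fixed $m$, the per-$m$ inequality
\begin{equation*}
\int\log\|A^{m}\|\,d\mu(A) \;\ge\; \int\log\|g_{m}\cdots g_{1}\|\,d\mu^{\otimes m}.
\end{equation*}
At $m=1$ this is an equality; the substantive content is for $m\ge 2$.

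To attack the per-$m$ inequality I would use bi-invariance to write a $\mu$-distributed $A$ as $A=UDV$ with $U,V$ independent Haar unitaries and $D$ a positive diagonal matrix of singular values. A direct computation then gives
\begin{equation*}
\|A^{m}\| = \|(DW)^{m-1}D\|,\qquad \|g_{m}\cdots g_{1}\| = \|D_{m}W_{m-1}D_{m-1}\cdots W_{1}D_{1}\|,
\end{equation*}
where on the left a single Haar unitary $W=VU$ is re-used in every slot, while on the right the $W_{i}$ and $D_{i}$ are independent copies. The required inequality is then a \emph{coherent-versus-independent} comparison: a single locked pair $(D,W)$ should produce more aligned growth than fresh random pairs. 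To make this rigorous I would invoke the theory of zonal spherical polynomials for the Gelfand pair $(\glnc,U(n))$---equivalently, Schur functions in the singular values, or Jack polynomials at $\alpha=1$---expand $\tr(X^{*}X)^{s}$ in the spherical-polynomial basis, evaluate both sides partition-by-partition using Schur--Weyl duality together with the Weingarten/orthogonality identities for Haar measure on $U(n)$, and aim for a Schur-positivity statement for the coefficient-wise difference. Once the polynomial moments are compared, one takes $s\to\infty$ to pass from moments back to $\log\|\cdot\|$ and then $m\to\infty$ to conclude. The combinatorial positivity step is where the real work lies and is the principal obstacle of the proof.
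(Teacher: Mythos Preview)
This theorem is quoted from Dedieu--Shub and is not proved in the present paper; however, the paper's proof of its real analogue (Theorem~\ref{thm:main}) follows the Dedieu--Shub method, so that is the natural comparison. That method is geometric: one identifies $\sum_{i\le k} r_i$ with an integral over the Grassmannian $\grass nk$, introduces the incidence variety $\Vb_A=\{(U,g):(UA)_\#g=g\}$, and applies the coarea formula to its two projections $\Pi_1,\Pi_2$. The inequality then reduces to a lower bound for an integral of the normal Jacobian $\NJ_{\Pi_1}$ over a fiber, which in the complex case is handled by a direct Vandermonde computation (Section~4.5 of \cite{DS}) and in the real case requires the Jack--polynomial positivity of \cite{knop-sahi}. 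None of this involves $m$-step products, subadditive sequences, or moment expansions.

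Your route is genuinely different, and its central step is missing. The exterior-power reduction and the rewriting of both sides as infima via Fekete and Furstenberg--Kesten are fine. But the per-$m$ inequality
\[
\int \log\|A^{m}\|\,d\mu(A)\;\ge\;\int \log\|g_{m}\cdots g_{1}\|\,d\mu^{\otimes m}
\]
is precisely the content of the theorem, and you do not prove it: you propose to expand trace moments in spherical polynomials and ``aim for a Schur-positivity statement,'' then explicitly label this ``the principal obstacle.'' That is a plan, not a proof. The per-$m$ inequality is strictly stronger than what is needed (only the infima must compare), and you give no evidence that it holds term-by-term; the passage from $\tr(X^{*}X)^{s}$ back to $\log\|\cdot\|$ via $s\to\infty$ would also require justification you do not supply.

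There is a further wrinkle in the reduction step. After pushing forward by $\Lambda^{k}$ the invariance group is only $\Lambda^{k}U(n)\subsetneq U\bigl(\bigwedge^{k}\C^{n}\bigr)$, so the decomposition $A=UDV$ with $U,V$ Haar on the full unitary group of the exterior power---which your per-$m$ argument invokes---is not available there. You would have to run the entire singular-value and spherical-polynomial computation back on $\C^{n}$, at which point the ``reduction to $k=1$'' has not reduced anything, and you are essentially facing the same Grassmannian structure that the Dedieu--Shub argument exploits directly.
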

We note that we use the same symbol $\|\cdot\|$ for both the operator norm $\|A\|$ of a matrix and for the euclidean norm of a vector, as in (\ref{eq:limit}).  We hope no confusion will arise.
In the Theorem above we have also introduced the notation  $f^+(x)=\max\{f(x), 0\}$ for a real-valued function $f$.

In \cite{DS} and \cite{BPSW} it is asked if a similar theorem holds for
$\gln$ and $\On$ perhaps with a constant $c_n$ depending on $n$. Here we
prove that it does.
Our main theorem is the following.
\begin{thm}\label{thm:main}
There exists a universal constant $c_n>0$ such that, if 
  $\mu$ is an orthogonally invariant measure on $\gln$
  satisfying  the integrability condition 
  $A\in\gln \mapsto \log^{+}(\|A\|)\;\mbox{and}\; \log^{+}(\|A^{-1}\|)\in
  L^1(\gln,\mu)$,
  then 
  $$
  \int_{A\in\gln} \left(\sum_{i=1}^k\log|\lambda_i(A)|\right)^{+}\,d\mu(A)\geq
  c_n\left(\sum_{i=1}^k r_i\right)^+
$$
for any $k, 1\leq k\leq n.$
\end{thm}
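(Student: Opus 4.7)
The plan is to reduce the theorem to a conditional inequality via the polar decomposition furnished by the $\On$-invariance of $\mu$, and then establish that conditional inequality through a moment computation that uses the theory of spherical polynomials for the pair $(\gln,\On)$. By the Kingman subadditive ergodic theorem applied to the $\wedge^k$-action on random products,
$$r_1+\cdots+r_k\;\le\;\int\log\|\wedge^k A\|\,d\mu(A)\;=\;\int\sum_{i=1}^k\log\sigma_i(A)\,d\mu(A),$$
where $\sigma_1(A)\ge\cdots\ge\sigma_n(A)$ are the singular values of $A$. Combined with the trivial inequality $\int f^{+}\,d\mu\ge(\int f\,d\mu)^{+}$ and the disintegration $d\mu=dU\,dV\,d\nu(\Sigma)$ obtained from the SVD $A=U\Sigma V^{t}$ (where $\nu$ is the singular-value marginal and $U,V$ are conditionally Haar), it suffices to prove, for every diagonal $\Sigma=\diag(\sigma_1,\ldots,\sigma_n)$ with positive entries,
$$\int_{\On\times\On}\Big(\sum_{i=1}^k\log|\lambda_i(U\Sigma V)|\Big)^{+}dU\,dV\;\ge\;c_n\big(\log(\sigma_1\cdots\sigma_k)\big)^{+}.$$
The elementary bridge from eigenvalues to a tractable trace is the inequality $|\tr(\wedge^k A)|=|e_k(\lambda_1(A),\ldots,\lambda_n(A))|\le\binom{n}{k}\prod_{i=1}^k|\lambda_i(A)|$, which gives $\sum_{i=1}^k\log|\lambda_i(A)|\ge\log|\tr(\wedge^k A)|-\log\binom{n}{k}$, so the task reduces to bounding $\int_{\On\times\On}\log^{+}|\tr(\wedge^k(U\Sigma V))|\,dU\,dV$ from below.

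The spherical-polynomial input is the evaluation of even moments. For each $m\ge 1$,
$$I_m(\Sigma)\;:=\;\int_{\On\times\On}\tr(\wedge^k(U\Sigma V))^{2m}\,dU\,dV$$
is a bi-$\On$-invariant polynomial in $A$, hence a symmetric polynomial in $\sigma_1^{2},\ldots,\sigma_n^{2}$ of degree $mk$. Expanding $I_m(\Sigma)$ in the basis of zonal polynomials $Z_\lambda$ (Jack polynomials at parameter $\alpha=2$) for the symmetric pair $(\gln,\On)$ and identifying the contribution from the partition $\lambda=(m^{k})$, the plan is to establish a lower bound of the form $I_m(\Sigma)\ge c_{n,k,m}(\sigma_1\cdots\sigma_k)^{2m}$ with $c_{n,k,m}>0$. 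Coupled with the pointwise upper bound $|\tr(\wedge^k B)|\le\binom{n}{k}\|\wedge^k B\|=\binom{n}{k}\sigma_1\cdots\sigma_k$, one has control on both the first and the second moments of the non-negative random variable $X_\Sigma:=|\tr(\wedge^k(U\Sigma V))|^{2m}$.

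A Paley--Zygmund argument then yields $X_\Sigma\ge\tfrac12\mathbb{E}[X_\Sigma]$ with probability at least some $p_{n,k,m}>0$ independent of $\Sigma$, equivalently $|\tr(\wedge^k(U\Sigma V))|\ge\delta_{n,k}\,\sigma_1\cdots\sigma_k$ with that probability. Therefore
$$\int_{\On\times\On}\log^{+}|\tr(\wedge^k(U\Sigma V))|\,dU\,dV\;\ge\;p_{n,k,m}\big(\log(\sigma_1\cdots\sigma_k)+\log\delta_{n,k}\big)^{+},$$
and the constant $\log\delta_{n,k}$ is absorbed into the positive part to give the desired conditional inequality with some $c_n>0$ (taking the minimum over $k\in\{1,\ldots,n\}$ if needed). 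We expect the main obstacle to be the explicit positivity of the spherical-polynomial coefficient corresponding to the maximal partition, together with uniform (in $\Sigma$) control of the higher moments required for the Paley--Zygmund step: this is precisely where the real case genuinely departs from the complex treatment of \cite{DS} and where the calculus of spherical polynomials for $(\gln,\On)$ plays its decisive role.
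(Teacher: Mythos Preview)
Your outline takes a genuinely different route from the paper, but it contains a real gap at the final ``absorption'' step. From $|\tr(\wedge^k A)|\le\binom nk\prod_{i\le k}|\lambda_i(A)|$ you get only
\[
\Bigl(\sum_{i\le k}\log|\lambda_i(A)|\Bigr)^+\ \ge\ \bigl(\log|\tr(\wedge^k A)|-\log\tbinom nk\bigr)^+,
\]
not a bound by $\log^+|\tr(\wedge^k A)|$; and the Paley--Zygmund step yields at best $|\tr(\wedge^k(U\Sigma V))|\ge\delta_{n,k}\,\sigma_1\cdots\sigma_k$ on a set of probability $p_{n,k}$, with $\delta_{n,k}<1$. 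Combining, you arrive at a lower bound of the form $p\,(\log(\sigma_1\cdots\sigma_k)-C)^+$ with $C>0$. The assertion that this can be ``absorbed into the positive part to give some $c_n>0$'' is false: there is no constant $c>0$ with $(x-C)^+\ge c\,x^+$ for all $x\ge 0$ (take any $0<x<C$). Since the conditional inequality must hold for \emph{every} $\Sigma$, and in particular for those with $\log(\sigma_1\cdots\sigma_k)$ positive but arbitrarily small, the additive loss is fatal exactly where the $(\cdot)^+$ in the statement matters. A secondary unproven point is the uniform moment ratio $I_{2m}(\Sigma)\le C\,I_m(\Sigma)^2$ needed for Paley--Zygmund; you correctly flag this as an obstacle, but even granting it the absorption problem remains.

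For contrast, the paper avoids additive constants entirely. It does not pass through $\tr(\wedge^k A)$ or moments. For fixed $A$ it works on the incidence variety $\Vb_A=\{(U,g)\in\On\times\grass nk:(UA)_\#g=g\}$ and applies the coarea formula to the two projections $\Pi_1,\Pi_2$. The pointwise bridge is $(\sum_i\log|\lambda_i(A)|)^+\ge\sup_{A_\#g=g}\log^+|\det A|_g|$, which is free of constants, and the right-hand side of the theorem is identified with $(\int\!\!\int\log|\det A|_g|\,dg\,d\mu)^+$. The only analytic input is then that the fiber integral of the normal Jacobian $\NJ_{\Pi_1}=|\det(\Id-D\mathcal{L}_{UA}(g))|$ over $\Pi_2^{-1}(g)\cong\Ok\times\Onk$ is at least $\vol\Ok\cdot\vol\Onk$; equivalently, that
\[
\int_{\Ok}\int_{\Onk}\det\bigl(\Id-\psi_2B_2\otimes\trnsp{(\psi_1B_1)^{-1}}\bigr)\,d\psi_2\,d\psi_1\ \ge\ 1.
\]
This is where spherical polynomials enter: expanding in irreducible characters, each factor is identified with a spherical function $\phi_\mu$ for $(\GL_N,\OO_N)$, which equals a normalized Jack polynomial $J_\lambda^{(2)}$ at the squared singular values; Knop--Sahi positivity then gives the clean inequality $\ge 1$ with no additive residue. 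That exactness is precisely what your trace/Paley--Zygmund scheme lacks.
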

Let $\sln$ be the \emph{special linear group} of $n\times n$ matrices
with determinant $1$. Then we have the following result.
\begin{cor}
There exists a universal constant $c'_n>0$ such
  that, if  $\mu$ is an orthogonally invariant probability measure on $\sln$, 
  then,   $$
  \int_{A\in\sln} \sum_{i=1}^k \log |\lambda_i(A)| \,d\mu(A)\\
  \geq c'_n\left(\sum_{i=1}^k r_i\right)^+. 
  $$
  \qed
\end{cor}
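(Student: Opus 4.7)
The plan is to derive the corollary essentially as an immediate consequence of Theorem \ref{thm:main}, exploiting the fact that the trace-zero constraint on $\sln$ forces the partial sums of logarithmic eigenvalue moduli and of Lyapunov exponents to be non-negative, so the positive-part operation on the left-hand side of Theorem \ref{thm:main} disappears.

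First I would record two elementary observations. Since every $A\in\sln$ satisfies $\det A=1$, we have $\prod_{i=1}^n\lambda_i(A)=1$ and hence $\sum_{i=1}^n\log|\lambda_i(A)|=0$. Because the eigenvalues are listed in decreasing order of modulus, the sequence $\log|\lambda_1(A)|\geq\cdots\geq\log|\lambda_n(A)|$ has mean $0$, which forces
$$S_k(A):=\sum_{i=1}^k\log|\lambda_i(A)|\;\geq\;\frac{k}{n}\sum_{i=1}^n\log|\lambda_i(A)|\;=\;0$$
for every $1\leq k\leq n$. In particular $S_k(A)^+=S_k(A)$ pointwise. The same argument applied to the random Lyapunov exponents (using that a product of elements of $\sln$ again has determinant $1$, so $\sum_{i=1}^n r_i=0$, and that the $r_i$ are decreasingly ordered) gives $\sum_{i=1}^k r_i\geq 0$, so this quantity also equals its positive part.

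Second, I would view $\mu$ as an orthogonally invariant probability measure on $\gln$ supported on $\sln$. Assuming the standard integrability hypothesis on $\log^+\|A^{\pm1}\|$ that makes Oseledets apply (and without which the $r_i$ and the left-hand integral are not meaningful), Theorem \ref{thm:main} yields
$$\int_{\sln}\bigl(S_k(A)\bigr)^+\,d\mu(A)\;\geq\;c_n\Bigl(\sum_{i=1}^k r_i\Bigr)^+.$$
Substituting $S_k(A)^+=S_k(A)$ on the left, the corollary follows with the explicit constant $c_n'=c_n$.

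There is no real obstacle here: the argument is formal once Theorem \ref{thm:main} is in hand, the only small point to verify being the sign inequalities $S_k(A)\geq 0$ and $\sum_{i=1}^k r_i\geq 0$, both of which are consequences of the determinant-one condition together with the ordering of the eigenvalues and of the Lyapunov spectrum. If one prefers to avoid implicit integrability assumptions on $\mu$, one can instead phrase the corollary as holding whenever the integral on the left-hand side is well-defined, since then the same integrability passes automatically to $\log^+\|A^{\pm1}\|$ on $\sln$ (using $|\lambda_1(A)|\leq\|A\|$ and $|\lambda_n(A)|^{-1}\leq\|A^{-1}\|$, together with $\sum_i\log|\lambda_i(A)|=0$ to bound one extreme in terms of the other).
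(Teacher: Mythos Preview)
Your proof is correct and follows essentially the same approach as the paper: the key observation in both is that for $A\in\sln$ one has $\prod_{j=1}^k|\lambda_j(A)|\geq 1$, i.e.\ $S_k(A)\geq 0$, so the positive part on the left-hand side of Theorem~\ref{thm:main} can be dropped. Your additional remarks on the sign of $\sum_{i\le k} r_i$ and on integrability are fine but not needed, since the right-hand side already carries the positive part in both the theorem and the corollary.
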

The proof of the corollary follows immediately since, for all $A\in
SL(n,\R)$, $\prod_{j=1}^k|\lambda_j(A)|\geq 1$, $(k=1,\ldots,n)$.

Some special cases of our main result Theorem \ref{thm:main} have been previously established. For $n=2$ the result is proved in \cite{DS} and Avila-Bochi \cite{AB}.  Rivin \cite{R} proves the case $n>2, k=1$.  (Both \cite{AB} and \cite{R} prove more general results in these restricted settings, from which the stated  results can be derived.)

\subsection{Motivation}
We place our results in a more general setting in order to provide motivation, 
which originates with the study of the entropy of
diffeomorphisms of closed manifolds.  Let $\pi:\V\to X$ be a finite-dimensional vector bundle.  The basic object of interest is the iteration of fiberwise linear maps $\mathcal A$  of $\pi$ which cover a map $f:X\to X$ of the base.
The \emph{cocycle} is described in the following diagram by the bundle map $\mathcal A:\V\to\V$ which satisfies  $\pi\circ\mathcal A=f\circ \pi$.
\begin{equation}\label{def:cocycle}
\begin{tikzcd}
\V \arrow[r,"\mathcal A"] \arrow[d,swap,"\pi"] &
  \V \arrow[d,"\pi"] \\
X\arrow[r,"f"] & X
\end{tikzcd}
\end{equation}
See Ruelle \cite{Ru}, Ma\~ne \cite{M}, and Viana \cite{V} for extensions.
We give four basic examples of this setup.

\begin{example}\label{ex:1}
  The base $X$ is one point. (This is the object of our paper.)
\end{example}

\begin{example}\label{ex:derivative}
  $X$ is a closed manifold $M$, $\V$ is the tangent bundle $TM$ of
  $M$, $f$ is a smooth (at least $C^{1+\alpha}$ endomorphism of $M$) 
and $\mathcal A=Tf$, the derivative of $f$.  This is the \emph{derivative cocycle}.
 Note that the $k$th-iterate of $Tf$ is given by
    $$
    (Tf)^k(x,v)=(f^k(x),Tf(f^{k-1}(x))\cdots Tf(x)v),\qquad (x,v)\in
    TM.
    $$\end{example}

\begin{example}\label{ex:random}
  Let $\V\xrightarrow[]{\pi}X$ be a fixed vector bundle  and $\mathscr
    F$ a family of bundle maps $(\mathcal A,f)$ as 
    in (\ref{def:cocycle}), with $\mathcal A:\V\to\V$ fibrewise linear and $f:X\to X$ a base map.  
Assume given a finite measure $\mu$ on $\mathscr F$.

    Then \emph{random products} of independent elements of $\mathscr F$,
    drawn with respect to the measure $\mu$, are
    described by the following cocycle.
    Let $\Gcal =\mathscr F^\N$ with the product measure $\mu^\N$.
    Writing elements of $\Gcal$ as 
  $$(\mathcal{A}_i,f_i)_{i}=(\cdots,(\mathcal{A}_{n}, f_n) ,\cdots,(\mathcal{A}_0,f_0))$$ 
we  define 
    $\sigma:\Gcal\to \Gcal$ by
    $\sigma ((\mathcal{A}_i,f_i)_{i})=(\mathcal{A}_{i+1},f_{i+1})_{i}$,
that is, shift to the right and delete the first term.
Then, the map $\mathcal H:\Gcal\times \V \to \Gcal\times \V$, given
    by 
    $$
    \mathcal H((\mathcal{A}_i,f_i)_i,v)=
  (\sigma((\mathcal A_i,f_i)_i),\mathcal{A}_0(v)),
\qquad ((\mathcal A_i,f_i)_i,v)\in\Gcal\times \V
    $$  
    defines the cocycle
$$
\begin{tikzcd}
  \Gcal\times \V \arrow[r,"\mathcal H"] \arrow[d,swap,"\Id_{\Gcal}\times\pi"] &
  \Gcal\times \V \arrow[d,"\Id_{\Gcal}\times\pi"] \\
  \Gcal\times X \arrow[r,"h"] & \Gcal\times X
\end{tikzcd}
$$
where the base map $h:\Gcal\times X\to \Gcal\times X$ is given by
    $h((\mathcal{A}_i,f_i)_i,x)=(\sigma((\mathcal A_i,f_i)_i),f_0(x))$,
    where $\pi(v)=x$.

The $k^{th}$-iterate of the cocycle $\mathcal H$, is given by
$$
    \mathcal{H}^k((\mathcal{A}_i,f_i)_i,v)=
    (\sigma^k((\mathcal{A}_i,f_i)_i),\mathcal{A}_{k-1}\cdots\mathcal{A}_{0}(v)),\qquad 
   ((\mathcal A_i,f_i)_i,v) \in \Gcal\times\V,
$$
   which yields the products of random  i.i.d. elements of the measure space
    $(\mathscr F,\mu)$.  
\end{example}

\begin{example}\label{ex:linear}
  Let $f:X\to X$ and $\phi:X\to\gln$.
  Let 
\begin{equation}\label{dec:linco}
\begin{tikzcd}
 X\times \R^n \arrow[r,"\mathcal A "] \arrow[d,swap,"\pi"] &
  X\times \R^n \arrow[d,"\pi"] \\
 X \arrow[r,"f"] &  X
\end{tikzcd}
\end{equation}
be defined by $\mathcal{A}(x,v)=(f(x),\phi(x)v)$.
    The functions $f$ and $\phi$ are frequently called \emph{linear
    cocyles} in the literature and $\mathcal A$ the associated linear
    extension.
    Here we use linear cocycle (or just cocycle) for both.
    In this case the $k$-th iterate of $\mathcal A$ is given by
    $$
    \mathcal {A}^k(x,v)=(f^{k}(x),\phi(f^{k-1}(x)) \cdots\phi(f(x))
    \phi(x)v),\qquad (x,v)\in X\times \R^n.
    $$
\end{example}

\medskip
\medskip
We now return to the general setting of a finite dimensional vector
bundle $\V\xrightarrow[]{\pi}X$  and cocycle as in
(\ref{def:cocycle}).  Assume that $\pi$ has a \emph{Finsler structure},
i.e. a norm on each fiber of $\V.$
Consider the limit
\begin{equation}\label{def:Lyexp}
  \lim_n \frac1n \log \frac{\|\mathcal{A}^n(v)\|}{\|v\|}, 
\end{equation}
for a  given nonzero vector $v\in \V$.
If the limit (\ref{def:Lyexp}) exists we call it a \emph{Lyapunov
exponent} of $\mathcal A$.  We refer the reader to the expository article of Wilkinson \cite{Wi} for an introduction to Lyapunov exponents.

\medskip

When $X$ is a finite measure space, subject to various measurability and
integrability conditions, the
Oseledets Theorem \cite{O} says
that  for all $v\in\V$ the limit (\ref{def:Lyexp})
exists almost surely and coincides with  one of the real numbers 
$$
\lambda_1\geq \lambda_2\geq \cdots\geq \lambda_n.
$$
(See also Gol'dsheid-Margulis\cite{GM}, Guivarc'h-Raugi\cite{GR},
Ruelle\cite{Ru}, and Viana\cite{V}.)

Recall that we have set $\psi^+(x)=\max(0,\psi(x))$ for a real-valued function $\psi.$ 
Then the theorem of Pesin \cite{P} and Ruelle \cite{Ru2} implies that in the setting of Example \ref{ex:derivative}, if $f:M\to M$  preserves a
measure $\mu$, absolutely continuous with respect to Lebesgue, and
$\mathcal A$ is the derivative cocycle, we have 
\begin{equation}
  \label{eq:lyapunov_integral}
\int_M \sum_i \lambda_i^+(x)\, dx= h_\mu(f),
\end{equation}
where $h_\mu(f)$ is the entropy of $f$ with respect to $\mu.$
From a dynamical systems perspective,  knowing when $h_\mu(f)$ is
positive and how large it may be is of great interest. 
But the Lyapunov exponents of the derivative cocycle are generally
difficult to compute, even to show positivity of the integral
(\ref{eq:lyapunov_integral}).
On the other hand the Lyapunov exponents of a \emph{random product} are
frequently easy to show positive.

One attempt to approach the problem is to consider diffeomorphism or
more generally cocycles that belong to \emph{rich} families $\mathscr
F$, and to prove that
$\int_M\sum_i \lambda^+(x,f)\,dx$  is positive for at least some elements of the
family by comparing with Lyapunov exponents of random products. 
It is not clear what the notion of \emph{rich} should be to carry out
this program of bounding the average Lyapunov exponents by those of random products.  

There is some success in Pujals-Robert-Shub\cite{PRS},
Pujals-Shub\cite{PS}, De la Llave-Shub-Sim\'o\cite{DSS}, and
Dedieu-Shub\cite{DS}, and an extensive
discussion in Burns-Pugh-Shub-Wilkinson\cite{BPSW} for derivative
cocycles.
A notion of rich which comes close for the circle and two sphere is
$\On$ invariance.
The theorem of \cite{DS} for unitarily invariant measures on $\glnc$ was important in this direction.   

\medskip
\subsection{Outline of paper}
We conclude this introduction with an outline of the remainder of the paper and a sketch of the ideas used in the proof of Theorem \ref{thm:main}.  The sums $\sum_{i\leq k} r_i$ of the random Lyapunov exponents appearing in Theorem \ref{thm:main} admit a geometric interpretation relating them to an integral over the Grassmannian manifold $\grass nk$ of $k$ dimensional subspaces of $\R^n.$  We use this relation in Section \ref{thm:mainOn} to reduce the proof of Theorem \ref{thm:main} to a comparison of an integral on the the orthogonal group to an integral on the Grassmannian.  This comparison is effected by applying the coarea formula to the two projections $\Pi_1,\Pi_2$ of the manifold $\Vb_A$ of fixed $k$-dimensional subspaces
$$\Vb_A=\{(U,g)\in \On\times \grass nk:\, (UA)_{\#}g=g \} \mbox{ for fixed }
A\in\gln. 
$$
This use of the coarea formula, presented in Sections \ref{sec:VA} and \ref{sec:proofmain} is similar to the approach of \cite{DS}.  Our main point of departure from the earlier paper comes in Section \ref{sec:ointegrals} in our treatment of bounding an integral of the normal Jacobian of the projection $\Pi_1.$  We use the theory of spherical polynomials for the symmetric space $G/K$ for $G=\gln$ and $K=\OO_N(\R).$  Our Theorem \ref{thm:GS} is a consequence of a positivity results for Jack polynomials due to Knop-Sahi \cite{knop-sahi}.  
This approach highlights a difficulty in extending the results of \cite{DS} to our setting.  In the case of $G=\glnc, K=\UU_n(\C),$ the associated spherical polynomials are simply Schur polynomials, thus permitting a more direct treatment in the earlier work using the Vandermonde determinant, see Section 4.5 of \cite{DS}.

\medskip

We hope that the results and techniques of this paper stimulate further interactions between the ergodic theory of cocycles and harmonic analyses on symmetric spaces.  One appealing direction is the investigations of families
of cocycles which have elements with $\int_{x\in X}\sum_i\lambda_i^+(x)\,dx$
positive. 
Especially interesting would be more rich families of dynamical systems which must have some elements of positive entropy. One approach for measure preserving families of dynamical systems would be to compare the Lyapunov exponents  of the derivative cocycles of the family to the Lyapunov exponents of the random products of the cocycles of the family.

\section{Proof of Theorem \ref{thm:main}} 
\label{sec:proofth2} 

Let $\grass{n}{k}$ be the Grassmannian of $k$-dimensional subspaces of
$\R^n$.  Given $g\in\grass nk$, let $\Ort{g}$ be the subgroup of $\On$
that fixes $g$.  For $A\in\gln$ we denote by $A_\#$ the mapping corresponding to the natural induced action on $\grass nk$ and by $A|_g$ the restriction of $A$ to the subspace $g$.
\medskip

Consider the Riemannian metric on $\On$ coming from its embedding in the space of $n\times n$ matrices with the natural inner product $\langle A,B\rangle=\tr(A\trnsp B).$
As a Lie group, this Riemannian structure on $\On$ is
left and right invariant and it induces a Riemannian
structure on $\grass nk$ as a homogeneous space of $\On$. We denote by 
$\vol \OO_n(\R)$ and $\vol\, \grass nk$ the Riemannian volumes of the orthogonal group and Grassmannian respectively, and note the relation
\begin{equation}
  \label{eq:volGr}
\vol\,\grass nk=\frac{\vol\OO_n(\R)}{\vol\OO_{k}(\R)\cdot\vol\OO_{n-k}(\R)}.
\end{equation}
Define the constant
\begin{equation}
  \label{def:cnk}
c_{n,k}=\frac{\vol\OO_{k}(\R)\cdot\vol\OO_{n-k}(\R)}{\binom nk}.
\end{equation}

\begin{thm}\label{thm:mainOn}
For any $A\in\gln$   
we have
$$
    \int_{U\in\On} \left[\sup_{\substack{g\in\grass nk:\\ (UA)_\#g=g}}\left(\log^+|\det
  UA|_g|\right)\right]d\On 
  \geq 
c_{n,k}\int_{g\in\grass nk}\log^+|\det A|_g|\,d\grass nk.
$$

\end{thm}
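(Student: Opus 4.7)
The plan follows the coarea strategy of \cite{DS}: introduce the incidence variety $\Vb_A=\{(U,g)\in\On\times\grass{n}{k}:(UA)_\#g=g\}$ with its two canonical projections $\Pi_1:\Vb_A\to\On$ and $\Pi_2:\Vb_A\to\grass{n}{k}$, and compute a single integral on $\Vb_A$ in two different ways. A dimension count gives $\dim\Vb_A=\dim\grass{n}{k}+\dim\Ort{g}=n(n-1)/2=\dim\On$, so generically $\Pi_1$ is a local diffeomorphism whose fiber $\Pi_1^{-1}(U)$ is the finite set of $k$-dimensional $UA$-invariant subspaces, of cardinality at most $\binom nk$ (the bound being attained whenever $UA$ has $n$ distinct eigenvalues, which holds for a.e.\ $U$). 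The fiber $\Pi_2^{-1}(g)=\{U\in\On:U(Ag)=g\}$ is a single coset of the stabilizer $\Ort{g}\cong\Ort{k}\times\Ort{n-k}$, of Riemannian volume $\vol\Ort{k}\cdot\vol\Ort{n-k}$.

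Two elementary observations drive the argument. First, since $\Pi_1^{-1}(U)$ has at most $\binom nk$ elements for a.e.\ $U$, the sup on the left-hand side of the theorem dominates the corresponding average:
$$\sup_{g:(UA)_\#g=g}\log^+|\det UA|_g|\;\geq\;\frac{1}{\binom nk}\sum_{g:(UA)_\#g=g}\log^+|\det UA|_g|.$$
Second, if $(U,g)\in\Vb_A$ then $UA|_g=U|_{Ag}\circ A|_g$ and $U|_{Ag}:Ag\to g$ is an isometry, so $|\det UA|_g|=|\det A|_g|$; in particular $\log^+|\det UA|_g|$ is constant along each $\Pi_2$-fiber. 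Applying the coarea formula first to $\Pi_1$ (discrete fibers) and then to $\Pi_2$ (submersion), and using this constancy, one obtains
$$\int_{\On}\sum_{g:(UA)_\#g=g}\log^+|\det UA|_g|\,d\On=\int_{\grass{n}{k}}\log^+|\det A|_g|\left(\int_{\Pi_2^{-1}(g)}\frac{\NJ(\Pi_1)(U,g)}{\NJ(\Pi_2)(U,g)}\,dU\right)d\grass{n}{k}.$$
Combined with the sup-versus-average bound above and the definition of $c_{n,k}$, Theorem \ref{thm:mainOn} reduces to the pointwise inequality
$$\int_{\Pi_2^{-1}(g)}\frac{\NJ(\Pi_1)(U,g)}{\NJ(\Pi_2)(U,g)}\,dU\;\geq\;\vol\Ort{k}\cdot\vol\Ort{n-k}=\vol\Pi_2^{-1}(g),$$
i.e.\ the fiber average of the Jacobian ratio is at least $1$.

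This last inequality is where I expect the main difficulty to lie. The Jacobian ratio encodes how the invariant-subspace condition deforms infinitesimally as $(U,g)$ varies on $\Vb_A$, and does not seem to admit any direct elementary estimate. Following the outline in the introduction, I would transport the computation to the symmetric space $\gln/\On$ by writing $A=K_1 P K_2$ in polar form and using the orthogonal invariance of the setup to reduce to a positive diagonal $P$; expand the resulting integrand in the basis of zonal spherical functions for the pair $(\gln,\On)$, which are the Jack polynomials with parameter $\alpha=2$; and then invoke the Knop--Sahi positivity (Theorem \ref{thm:GS}) to conclude that the relevant expansion coefficients are nonnegative. This is precisely the step at which the real orthogonal case is substantially harder than the complex unitary case of \cite{DS}: there the spherical functions are Schur polynomials and the required positivity is visible from the Vandermonde determinant, whereas here one genuinely needs the combinatorial positivity of Jack polynomial expansions.
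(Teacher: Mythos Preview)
Your proposal is correct and follows essentially the same route as the paper: the incidence variety $\Vb_A$ with its two projections, the $\binom{n}{k}$ bound on $\#\Pi_1^{-1}(U)$ giving the sup-versus-sum inequality, the double application of the coarea formula (the paper records $\NJ_{\Pi_2}=1$ via Lemma~\ref{lemma:ds}, which simplifies your Jacobian ratio), and the reduction to the fiber inequality, which is exactly Theorem~\ref{thm:GS}. Your sketch for the latter---expanding in spherical functions for $(\gln,\On)$ and invoking Knop--Sahi positivity of Jack polynomials at $\alpha=2$---is also what the paper does in Section~\ref{sec:ointegrals} (note that Theorem~\ref{thm:GS} in the paper is the fiber-integral inequality itself, with the Jack-polynomial positivity entering through Theorem~\ref{thm:Fpositivity}).
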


If we integrate instead with respect to the Haar measure $dU$ on $\OO_n(\R)$ and the invariant probability measure $dg$ on $\grass nk$ we get 
\begin{equation}
  \label{mainHaar}
 \int_{U\in\On} \left[\sup_{\substack{g\in\grass nk:\\ (UA)_\#g=g}}\left(\log^+|\det
  UA|_g|\right)\right]dU
  \geq 
\frac{1}{\binom nk}
\int_{g\in\grass nk}\log^+|\det A|_g|\,dg.
\end{equation}
This follows immediately from Theorem \ref{thm:mainOn} and (\ref{eq:volGr}). 
The proof of Theorem \ref{thm:mainOn} is given
in Sections \ref{sec:proofth2} and \ref{sec:ointegrals}.


\medskip

Note that Theorem \ref{thm:mainOn} implies a slightly more general
result.

\begin{thm}\label{thm:gln}
  There is a constant $c_n>0$ such that, if $\mu$ is an orthgononally
  invariant probability measure on $\gln$, then,
    $$
    \int_{A\in\gln} \left[\sup_{\substack{g\in \grass nk:\\A_{\#}g=g}}\log^{+}|\det
  A|_g|\right]d\mu 
    \geq c_n\int_{A\in\gln} \int_{g\in \grass nk} \log^{+}|\det A|_g|\,d\grass
  nk\,d\mu.
$$
\end{thm}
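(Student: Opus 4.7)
The plan is to derive Theorem \ref{thm:gln} from the pointwise-in-$A$ inequality of Theorem \ref{thm:mainOn} by integrating against $d\mu$ and exploiting the orthogonal invariance of the measure.

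First, I would apply Theorem \ref{thm:mainOn} for each $A\in\gln$ and integrate both sides with respect to $d\mu(A)$. Since the integrands are nonnegative, Tonelli's theorem yields
$$
\int_{A\in\gln}\int_{U\in\On}\left[\sup_{\substack{g\in\grass nk:\\(UA)_{\#}g=g}}\log^{+}|\det UA|_g|\right]d\On\,d\mu(A)
\;\geq\; c_{n,k}\int_{A\in\gln}\int_{g\in\grass nk}\log^{+}|\det A|_g|\,d\grass nk\,d\mu(A).
$$
The right-hand side is already in the form demanded by Theorem \ref{thm:gln}, so only the left-hand side needs to be simplified.

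Next, I would swap the order of integration on the left and, for each fixed $U\in\On$, perform the change of variable $B=UA$ in the inner $\mu$-integral. Since $\mu$ is orthogonally invariant, the pushforward of $\mu$ under $A\mapsto UA$ is again $\mu$, so the inner integral becomes
$$
\int_{B\in\gln}\sup_{g:B_{\#}g=g}\log^{+}|\det B|_g|\,d\mu(B),
$$
which is independent of $U$. Integrating this constant over $\On$ with respect to $d\On$ contributes a factor of $\vol\On$, leaving
$$
\vol\On\cdot\int_{A\in\gln}\sup_{g:A_{\#}g=g}\log^{+}|\det A|_g|\,d\mu(A)
\;\geq\; c_{n,k}\int_{A\in\gln}\int_{g\in\grass nk}\log^{+}|\det A|_g|\,d\grass nk\,d\mu(A).
$$
Dividing through establishes Theorem \ref{thm:gln} with the explicit positive constant $c_n=c_{n,k}/\vol\On$.

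The main technical points to check are measurability and integrability. For any $A\in\gln$, the supremum $\sup_{g:A_{\#}g=g}\log^{+}|\det A|_g|$ equals $(\sum_{i=1}^k\log|\lambda_i(A)|)^{+}$, where $|\lambda_1(A)|\geq\cdots\geq|\lambda_n(A)|$ are the moduli of the eigenvalues of $A$; this is a Borel-measurable function of $A$ since it is built from the coefficients of the characteristic polynomial, so Tonelli applies without issue. If both sides are infinite the inequality is trivial, while under the integrability hypothesis of Theorem \ref{thm:main} both sides are finite. The substantive content lies entirely in Theorem \ref{thm:mainOn}; the only genuine obstacle at this stage is careful bookkeeping of the volume factors so that the constant $c_n$ is exhibited as strictly positive.
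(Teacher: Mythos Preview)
Your argument is correct and is essentially the paper's proof written out in full: the paper says ``disintegrate the measure along orbits and apply the inequality of Theorem~\ref{thm:mainOn} orbit by orbit,'' which is exactly your Tonelli--plus--change-of-variables computation using the $\On$-invariance of $\mu$. One small correction: your claim that $\sup_{g:A_{\#}g=g}\log^{+}|\det A|_g|$ \emph{equals} $(\sum_{i\le k}\log|\lambda_i(A)|)^{+}$ is false over $\R$ (e.g.\ a rotation in $\R^2$ has no invariant line, and the paper only asserts the inequality $\geq$), but you only invoke this for measurability, which holds for other reasons and is in any case irrelevant since Tonelli applies to nonnegative measurable integrands without any integrability hypothesis.
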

\begin{proof}
  Disintegrate the measure along orbits and apply the inequality of
  Theorem \ref{thm:mainOn}, orbit
  by orbit. Note that $|\det A|_g|$ is constant on an orbit.
\end{proof}

\begin{proof}[Proof of Theorem \ref{thm:main}]
  
Pointwise we have 
  $$
  \left(\sum_{i=1}^k\log|\lambda_i(A)|\right)^+ \geq \sup_{\substack{g\in \grass
  nk:\\ A_{\#}g=g}}\log^{+}|\det A|_g|,\quad (A\in\gln),
  $$
where the supremum on the right hand side is defined to be $0$ if the
set of $g\in\grass nk$ such that $A_\#g=g$ is empty.

Then, for finishing the proof of  Theorem \ref{thm:main} it suffice to identify the right hand side of the
expression in Theorem \ref{thm:gln} in terms of
$\left(\sum_{i=1}^kr_i\right)^+$.

As in the proof of Theorem 3 in \cite{DS}, and surely in many other
places in the reference,
$$
 \sum_{i=1}^k r_i =\int_{A\in\gln}\int_{g\in \grass nk}\log \Big|\det
  A|_g\Big|\, d\grass nk\,d\mu
$$
so
\begin{align*}
  \left(\sum_{i=1}^k r_i\right)^+ &= \left(\int_{A\in\gln}\int_{g\in \grass nk}\log |\det
  A|_g|\, d\grass nk\,d\mu\right)^+\\
  &  \leq \int_{A\in\gln}\int_{g\in \grass nk}\log^+ |\det
  A|_g|\, d\grass nk\,d\mu.
\end{align*}
\end{proof}

We will give the proof of Theorem \ref{thm:mainOn} in Sections \ref{sec:proofth2} and \ref{sec:ointegrals} after some preparations in
the next section.

\section{Manifold of fixed subspaces}
\label{sec:VA}

Let $A\in \gln$, and define the manifold of fixed $k$-dimensional
subspaces 
$$
\Vb_A:=\{(U,g)\in \On\times \grass nk:\, (UA)_{\#}g=g \}
$$

Let $\Pi_1:\Vb_A\to \On$ and $\Pi_2:\Vb_A\to\grass nk$ be the associated
projections.

$$
\xymatrix{
&\Vb_A\ar@{->}@/_/[ddl]_{\Pi_1}\ar@{->}@/^/[ddr]^{\Pi_2}&\\
&&\\
\On&&\grass nk}
$$

\medskip

Given $g\in \grass nk$, one has $$
\Pi_2^{-1}(g)=\{(U,g): U\in \On,\, A_{\#}g=(U^{-1})_{\#}g\}.
$$
By abusing notation we will write 
$\Pi_2^{-1}(g)=\Pi_1\Pi_2^{-1}(g)$, which can be identified with the product space  $\Ort{g}\times\Ort{g^\perp}$,
which we in turn identify with $\Ort{k}\times \Ort{n-k}$.
Similarly, given $U\in \On$, we identify  
$
\Pi_1^{-1}(U)$ with $$\{g\in \grass nk:\mbox{ fixed by } (UA)_\#\}.
$$

  \begin{rem}\label{rem:finitefiber}
  Note that, on a set of full measure in $\On$, the fiber $\Pi_1^{-1}(U)$ is finite
  and $\#\Pi_1^{-1}(U)$ is bounded above by $\binom nk$.
  This follows from the fact that the set of $U\in\On$ such that
  $UA$ has repeated eigenvalues, is a proper subvariety of
  $\On$ defined by the discriminant of the characteristic polynomial of
    $UA$. Therefore a $k$-dimensional invariant subspace for $UA$, where 
  $U$ lies in the complement of the algebraic subvariety described above, corresponds to a choice of $k$-eigenvalues for $UA$, and corresponding
eigenspaces,
\end{rem}


The tangent space to the Grassmannian $\grass nk$ at $g$,  can be
identified in a natural way with  the set of  linear maps
$\Hom(g,g^\perp)$, i.e.,  any subspace $g'\in\grass nk$, in a neighborhood of $g$
can be represented as the graph of a unique map in $\Hom(g,g^\perp)$.
More precisely, if we denote by $\pi_g$ and $\pi_{g^\perp}$ the
orthogonal projections of $\R^n=g\oplus g^\perp$ into $g$ and $g^\perp$
respectively, then, $g'\in \grass nk$
such that $g'\cap g^\perp=\{0\}$, is the graph of the linear map
$\pi_{g^\perp}\circ((\pi_{g})|_{g'})^{-1}$.

\begin{lem}\label{lem:endgrass}
  Let $B\in\gln$, and $g\in\grass nk$ such that $B_\#g=g$. Then, the
  induced map  $\mathcal{L}_B:\Hom(g,g^\perp)\to \Hom(g,g^\perp)$, on
  local charts, is given by
  $$
  \mathcal{L}_B(\varphi)= [\pi_{g^\perp}(B|_{g^\perp})]\circ \varphi\circ
  \left([\pi_{g}(B|_{g})]+ [\pi_{g}(B|_{g^\perp})]\circ \varphi)
  \right)^{-1}
  $$
Furthermore,
its derivative at $g$, represented by $0\in \Hom(g,g^\perp)$, is given
  by
  $$
  D\mathcal{L}_B(0)\dot\varphi=
  [\pi_{g^\perp}(B|_{g^\perp})]\circ\dot\varphi\circ
  [\pi_{g}(B|_{g})]^{-1}
  $$
\end{lem}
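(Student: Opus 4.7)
The plan is a direct computation in the block decomposition of $B$ induced by the orthogonal splitting $\R^n = g \oplus g^\perp$. Writing
$$B = \begin{pmatrix} B_{11} & B_{12} \\ B_{21} & B_{22} \end{pmatrix},\qquad B_{11} = \pi_g \circ B|_g,\;\; B_{12} = \pi_g \circ B|_{g^\perp},\;\; B_{21} = \pi_{g^\perp} \circ B|_g,\;\; B_{22} = \pi_{g^\perp} \circ B|_{g^\perp},$$
the hypothesis $B_\# g = g$ is equivalent to $B_{21} = 0$. Since $B \in \gln$ is invertible with block upper-triangular form, both diagonal blocks $B_{11}$ and $B_{22}$ must be invertible as well.

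Next I would unwind the chart. For $\varphi \in \Hom(g, g^\perp)$ sufficiently close to $0$ the corresponding subspace is $g_\varphi = \{v + \varphi v : v \in g\} \in \grass nk$, with $g_0 = g$. Applying $B$ pointwise and using $B_{21} = 0$ gives
$$B(v + \varphi v) \;=\; (B_{11} + B_{12}\varphi)\,v \;+\; B_{22}\varphi\,v \;\in\; g \oplus g^\perp.$$
For small $\varphi$ the operator $B_{11} + B_{12}\varphi : g \to g$ is invertible as a perturbation of $B_{11}$, so the substitution $w = (B_{11} + B_{12}\varphi)\,v$ exhibits $B(g_\varphi)$ as the graph of the linear map
$$w \;\longmapsto\; B_{22}\,\varphi\,(B_{11} + B_{12}\varphi)^{-1}\,w,$$
which is exactly the claimed formula for $\mathcal{L}_B(\varphi)$.

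For the derivative at $\varphi = 0$, I would factor $\mathcal{L}_B(\varphi) = F(\varphi)\,G(\varphi)$ with $F(\varphi) = B_{22}\varphi$ (linear in $\varphi$) and $G(\varphi) = (B_{11} + B_{12}\varphi)^{-1}$. Since $F(0) = 0$, the product rule collapses to
$$D\mathcal{L}_B(0)\,\dot\varphi \;=\; (DF(0)\,\dot\varphi)\, G(0) \;=\; B_{22}\,\dot\varphi\,B_{11}^{-1},$$
which matches the stated expression (the contribution from $DG(0)$ dies against $F(0)=0$, so one never needs the formula $DG(0)\dot\varphi = -B_{11}^{-1}B_{12}\dot\varphi B_{11}^{-1}$).

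Nothing in this argument is genuinely hard; the only points deserving care are verifying that the block upper-triangular structure of an invertible $B$ forces invertibility of the diagonal blocks, and recognising $B(g_\varphi)$ as the graph of a well-defined linear map via the invertibility of $B_{11} + B_{12}\varphi$ for small $\varphi$. These are the two places where I would spell out the routine details.
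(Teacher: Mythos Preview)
Your proof is correct and is exactly the standard block-decomposition computation one would expect; the paper itself does not spell out a proof of this lemma, treating it as routine after recalling the graph-chart description of $\grass nk$. Your identification $B_{21}=0$, the parametrisation $B(g_\varphi)=\{w+B_{22}\varphi(B_{11}+B_{12}\varphi)^{-1}w:w\in g\}$, and the product-rule collapse at $\varphi=0$ are all fine.
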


Let us denote by $\NJ_{\Pi_1}$ and $\NJ_{\Pi_2}$ the normal Jacobians of
the maps $\Pi_1$ and $\Pi_2$ respectively. (See \cite[Section 3.1]{DS}.)

\begin{lem}[{\cite[Section 3.2]{DS}}]\label{lemma:ds}
  Given $(U,g)\in\mathbb V_A$, one has 
 \begin{itemize}
   \item ${\NJ}_{\Pi_1}(U,g)= |\det \Id -D\mathcal{L}_{UA}(g)|$;
   \item $\NJ_{\Pi_2}(U,g)=1$.
 \end{itemize} 
\end{lem}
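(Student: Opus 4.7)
My plan is to compute the two normal Jacobians via local coordinates on $\Vb_A$ near $(U,g)$, combining the formula for $D\mathcal L_{UA}$ from Lemma \ref{lem:endgrass} with the implicit function theorem. I parameterize $\On$ near $U$ by $X \mapsto U e^X$ with $X \in \mathfrak{o}_n$, and $\grass nk$ near $g$ by the chart $\varphi \in \Hom(g,g^\perp)$ of the excerpt (so $g$ itself corresponds to $\varphi = 0$). In these coordinates, $\Vb_A$ is the zero locus of
\[
F(X,\varphi) \;=\; \mathcal L_{U e^X A}(\varphi) - \varphi,
\]
and $F(0,0) = 0$ since $(UA)_\# g = g$.

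The linearization of $F$ at the origin has $\partial_\varphi F(0,0) = D\mathcal L_{UA}(g) - \Id = -L$, where $L := \Id - D\mathcal L_{UA}(g)$. By Remark \ref{rem:finitefiber}, $L$ is invertible off a proper subvariety of $\On$, so the implicit function theorem represents $\Vb_A$ locally as the graph $\varphi = \varphi(X)$, with $\partial_X \varphi(0) = L^{-1}\partial_X F(0,0)$. Under the coarea/implicit-function convention for volumes used in \cite[Sec.~3.1]{DS}, the natural volume on $\Vb_A$ inherits the factor $|\det \partial_\varphi F|^{-1}$ from the product $dX\,d\varphi$ on $\mathfrak{o}_n \times \Hom(g,g^\perp)$. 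Since $\Pi_1$ becomes the projection $(X,\varphi(X)) \mapsto X$ in these graph coordinates, pulling back the volume form on $\On$ introduces the density $|\det \partial_\varphi F(0,0)| = |\det L|$, giving $\NJ_{\Pi_1}(U,g) = |\det(\Id - D\mathcal L_{UA}(g))|$.

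For $\NJ_{\Pi_2}$ I use the fiber-bundle description of $\Pi_2$. The fiber $\Pi_2^{-1}(g)$ equals the left coset $\Ort g \cdot U_0 \subset \On$ for any $U_0$ with $(U_0 A)_\# g = g$, and $\Ort g \cong \Ort k \times \Ort{n-k}$ acts freely by isometric left translations on $\On$. Thus $\Vb_A$ is an isometric $(\Ort k \times \Ort{n-k})$-bundle over $\grass nk$, and a choice of local section $g \mapsto U(g)$ produces a product trivialization $(g,W) \mapsto (W \cdot U(g),g)$. With the conventions of \cite[Sec.~3.1]{DS}, the implicit-function volume on $\Vb_A$ decomposes in this trivialization as the product of the invariant measure $dg$ on $\grass nk$ and the Haar measure on $\Ort g$; consequently $\NJ_{\Pi_2}(U,g) = 1$.

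The main obstacle will be reconciling the two parameterizations of $\Vb_A$ — the implicit-function graph over $\On$ used for $\Pi_1$ and the local product trivialization over $\grass nk$ used for $\Pi_2$ — and verifying that both yield the same normal volume on $\Vb_A$. This compatibility is a Jacobian calculation that exploits the equivariance of the setup under the isometric action of $\Ort k \times \Ort{n-k}$. Once it is in place, both formulas follow directly from the explicit expression for $D\mathcal L_{UA}$ provided by Lemma \ref{lem:endgrass}.
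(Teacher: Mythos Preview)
The paper does not give its own proof of this lemma; it simply imports the result from \cite[Section~3.2]{DS}. So there is no in-paper argument to compare against.

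Your proposal is a reasonable outline of the kind of computation carried out in \cite{DS}, but as written it is a plan rather than a proof, and one step is not correct as stated. The assertion that ``the natural volume on $\Vb_A$ inherits the factor $|\det \partial_\varphi F|^{-1}$ from the product $dX\,d\varphi$'' is not a general fact about implicitly defined submanifolds: the induced Riemannian volume on $\Vb_A\subset\On\times\grass nk$ is governed by the full Gram matrix of a basis of $T\Vb_A$, which involves both $\partial_X F$ and $\partial_\varphi F$, not $\partial_\varphi F$ alone. Consequently the one-line deduction of $\NJ_{\Pi_1}=|\det L|$ from that claim does not go through. What the computation in \cite{DS} actually yields cleanly is the \emph{ratio} $\NJ_{\Pi_1}/\NJ_{\Pi_2}=|\det(\Id-D\mathcal L_{UA}(g))|$, and the normalization $\NJ_{\Pi_2}=1$ then requires exactly the compatibility check you flag as the ``main obstacle'' --- namely, that the horizontal space of the bundle $\Pi_2:\Vb_A\to\grass nk$ (orthogonal complement of the fibre tangent inside $T\Vb_A$) is carried isometrically onto $T_g\grass nk$ by $D\Pi_2$. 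You identify this correctly as the crux but do not carry it out; that is precisely the content borrowed from \cite{DS}.
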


In Section 5 we will need the normal Jacobian written more explicitly.  To this end, choose bases $v_1, \ldots, v_k$ for $g$ and $v_{k+1}, \ldots, v_n$ for its orthogonal complement $g^\perp$.  In terms of the basis $v_1,\ldots, v_n$ of $\R^n$, a linear map $B:\R^n\to\R^n$ which satisfies $Bg=g$ is represented by a matrix of the form 
$$
\left(\begin{array}{ c | c }
    B_1 & * \\
    \hline
    0 & B_2
  \end{array}\right).
$$
By Lemma \ref{lem:endgrass}, if $X$ is the matrix representing  $\dot\varphi$ in this basis, then 
$D\mathcal{L}_B(0)\dot\varphi$ is represented by the matrix $B_2XB_1^{-1}$.

\begin{lem}\label{lem:jacobian}
Let  $(U,g)\in\mathbb V_A$ and let 
$$
\left(\begin{array}{ c | c }
    B_1 & * \\
    \hline
    0 & B_2
  \end{array}\right).
$$
represent the map $UA$ in the basis $v_1,\ldots, v_n$ defined above.  Then 
$$\det (\Id -D\mathcal{L}_{UA}(g))=
\det(\Id-B_2\otimes \trnsp B_1^{-1})
$$
\end{lem}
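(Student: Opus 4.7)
The plan is to combine Lemma \ref{lem:endgrass} with the chosen basis to write $D\mathcal{L}_{UA}(g)$ explicitly as a matrix operator, and then apply the standard vectorization identity to reinterpret $\det(\Id - D\mathcal{L}_{UA}(g))$ as a determinant on $\R^{k(n-k)}$ involving a Kronecker product.

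First I would read off from the block form that $\pi_g(UA|_g) = B_1$ and $\pi_{g^\perp}(UA|_{g^\perp}) = B_2$ in the bases $v_1,\ldots,v_k$ and $v_{k+1},\ldots,v_n$ (the vanishing of the lower-left block reflects that $g$ is invariant, so the map $\pi_{g^\perp}(UA|_g)$ is zero and does not enter). Note that $B_1$ is invertible because $UA \in \gln$ leaves $g$ invariant, so $UA|_g \colon g \to g$ is invertible. By Lemma \ref{lem:endgrass}, the derivative takes the simple form
$$D\mathcal{L}_{UA}(g)\dot\varphi \;=\; B_2 \circ \dot\varphi \circ B_1^{-1},$$
so under the basis identification $\Hom(g,g^\perp) \cong \R^{(n-k)\times k}$ sending $\dot\varphi$ to its matrix $X$, the derivative is the linear map $T \colon X \mapsto B_2 X B_1^{-1}$ on $(n-k) \times k$ matrices.

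Next I would apply the vectorization identity $\mathrm{vec}(PXQ) = (\trnsp{Q} \otimes P)\,\mathrm{vec}(X)$ with $P = B_2$ and $Q = B_1^{-1}$. This represents $T$ on $\R^{k(n-k)}$ by the matrix $\trnsp{B_1}^{-1} \otimes B_2$, which immediately gives
$$\det(\Id - D\mathcal{L}_{UA}(g)) \;=\; \det(\Id - \trnsp{B_1}^{-1} \otimes B_2).$$
To match the form in the statement, I would invoke the perfect-shuffle permutation: there is a permutation matrix $S$ with $S(P \otimes Q)S^{-1} = Q \otimes P$ for all square $P,Q$, so $\det(\Id - P \otimes Q) = \det(\Id - Q \otimes P)$. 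Applying this swaps the factors and yields $\det(\Id - B_2 \otimes \trnsp{B_1}^{-1})$, which is the claimed identity.

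The proof is essentially a routine identification, so there is no substantive obstacle; the only subtlety is bookkeeping of conventions for $\mathrm{vec}$ and the ordering of tensor factors, and both choices lead to the same determinant thanks to the shuffle permutation.
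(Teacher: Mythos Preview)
Your proof is correct and follows exactly the approach indicated by the paper: the paper does not write out a separate proof of this lemma, but records just before its statement that $D\mathcal{L}_B(0)\dot\varphi$ is represented by $X\mapsto B_2XB_1^{-1}$, leaving the determinant identity as an immediate consequence. Your use of the vectorization identity and the commutation (perfect-shuffle) permutation to pass from $\trnsp{B_1}^{-1}\otimes B_2$ to $B_2\otimes\trnsp{B_1}^{-1}$ supplies precisely the missing routine details.
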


\section{Proof of Theorem \ref{thm:mainOn}}\label{sec:proofmain}

Let $\phi:\grass nk\to\R$ be an integrable  function, and
let $\hat\phi:\mathbb{V}_A\to\R$ be its lift to $\mathbb V_A$, i.e.
$\hat\phi$ is  given by
$\hat\phi:=\phi\circ\Pi_2$. (Note that given $g\in\grass nk$,  $\hat\phi$ is constant in the
fiber $\Pi_2^{-1}(g)$, and its value coincides with the value of
$\phi$ at $g$.) 

For a set of full measure of $U\in \On$ (cf. Remark
\ref{rem:finitefiber}) we have 
\begin{equation}\label{eq:supleqav}
  \binom{n}{k}\sup_{g\in\Pi_1^{-1}(U)}(\phi(g))\geq
  \#\Pi_1^{-1}(U)\sup_{g\in\Pi_1^{-1}(U)}(\phi(g))\geq
\sum_{g\in\Pi_1^{-1}(U)}\phi(g).
\end{equation}

By the coarea formula applied to $\Pi_1$ we get
\begin{equation}\label{eq:coarea1}
  \int_{U\in\On}\left( \sum_{g\in\Pi_1^{-1}(U)}\phi(g)\right)d\On=
  \int_{\Vb_A}\hat\phi(U,g)\, \NJ_{\Pi_1}(U,g)\,
  d\Vb_A
\end{equation}
On the other hand, applying the coarea formula to the projection $\Pi_2$, 
\begin{align}\label{eq:coarea2}
  &\int_{\Vb_A}\hat\phi(U,g)\, \NJ_{\Pi_1}(U,g)\, d\Vb_A 
  \\& \qquad\qquad
  =\int_{g\in\grass nk}\left(\int_{U\in \Pi_2^{-1}(g)}
  \phi(g)\, \NJ_{\Pi_1}(U,g)\, d\Pi_2^{-1}(g) \right)\,d\grass
  nk,\nonumber
\end{align}
where we have used the fact that $\NJ_{\Pi_2}=1$.

Then from (\ref{eq:supleqav}), (\ref{eq:coarea1}), (\ref{eq:coarea2}) and Lemma \ref{lemma:ds}
we have 
\begin{align}\label{eq:final}
  &  \int_{U\in\On}
  \left(\sup_{g\in\Pi_1^{-1}(U)}(\phi(g))\right)d\On\nonumber\\ \nn
  & \qquad\qquad \geq
  \binom nk^{-1} \int_{g\in\grass nk}\phi(g)\left[\int_{U\in \Pi_2^{-1}(g)}
  {\NJ_{\Pi_1}(U,g)}\, d\Pi_2^{-1}(g) \right]\,d\grass nk\\
& \qquad\qquad =\binom nk^{-1} \int_{g\in\grass nk}\phi(g)\left[\int_{U\in \Pi_2^{-1}(g)}|\det \Id -D\mathcal{L}_{UA}(g)|
  \, d\Pi_2^{-1}(g) \right]\,d\grass nk
\end{align}

Specialize now to  $\phi:\grass nk\to\R$ given by
\begin{equation*}
  \phi(g):=\log^+ |\det A|_g|,\quad g\in \grass nk.
\end{equation*}
In particular 
$$
\sup_{g\in\Pi_1^{-1}(U)}\phi(g) =  \sup_{\substack{g\in\grass nk:\\(UA)_\#g=g}}\log^+ |\det (UA)|_g|.
$$

Now, the proof of Theorem \ref{thm:mainOn}, follows from
Theorem \ref{thm:GS} below which is used to bound the bracketed inner integral in 
\eqref{eq:final}; this together with the nonnegativity of $\phi$ proves
Theorem 2.

\begin{thm}\label{thm:GS}
Given $g\in\grass nk$, one has 
  $$
\int_{U\in \Pi_2^{-1}(g)}
 \left( \det \Id -D\mathcal{L}_{UA}(g)\right)\, d\Pi_2^{-1}(g)(U) \geq \vol\OO_k(\R)\cdot\vol\OO_{n-k}(\R).
  $$
\end{thm}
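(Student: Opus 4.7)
The plan is to reduce the claim to a sum of products of single-group integrals via the dual Cauchy identity, identify each single-group integral as a zonal spherical function for $(\GL_m(\R),\OO_m(\R))$, and invoke the Knop--Sahi positivity theorem for Jack polynomials.

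First I would parametrize the fiber. Choose an orthonormal basis $v_1,\ldots,v_n$ with $g=\operatorname{span}(v_1,\ldots,v_k)$, and fix (via QR applied to $A|_g$) some $U_0\in\OO_n(\R)$ such that $U_0A$ is block upper-triangular with diagonal blocks $R\in\GL_k(\R)$ and $T\in\GL_{n-k}(\R)$. Since the stabilizer of $g$ in $\OO_n(\R)$ is $\OO(g)\times\OO(g^\perp)$, every $U\in\Pi_2^{-1}(g)$ has the form $U=(V_1\oplus V_2)U_0$ with $(V_1,V_2)\in\OO_k(\R)\times\OO_{n-k}(\R)$; the bi-invariant metric on $\OO_n(\R)$ restricts to the product metric on this subgroup, so the Riemannian measure on the fiber factors as $dV_1\,dV_2$. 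In this parametrization $UA$ has diagonal blocks $B_1=V_1R$ and $B_2=V_2T$, so Lemma~\ref{lem:jacobian} identifies the integrand with $\det\!\bigl(\Id-(V_2T)\otimes(V_1E)\bigr)$, where $E:=\trnsp{R^{-1}}$.

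Next, apply the dual Cauchy identity
$$\det(\Id-X\otimes Y)=\sum_\lambda(-1)^{|\lambda|}\sch_\lambda(X)\sch_{\lambda'}(Y)$$
(a finite sum once $\ell(\lambda)\leq n-k$ and $\ell(\lambda')\leq k$) and integrate term by term to reduce the claim to evaluating
$$I_{m,\mu}(M):=\int_{\OO_m(\R)}\sch_\mu(VM)\,dV.$$
Writing $\sch_\mu=\tr\rho_\mu$ for the irreducible polynomial $\GL_m(\C)$-representation of highest weight $\mu$, and using that $\int_{\OO_m(\R)}\rho_\mu(V)\,dV=\vol(\OO_m(\R))\,P_\mu$ with $P_\mu$ the orthogonal projection onto $\rho_\mu^{\OO_m}$, Littlewood's branching rule for $\GL_m\supset\OO_m$ gives $\dim\rho_\mu^{\OO_m}=1$ when $\mu$ has all parts even and $=0$ otherwise. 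In the nontrivial case $I_{m,\mu}(M)=\vol(\OO_m(\R))\,\phi_\mu^{(m)}(M)$ is a spherical matrix coefficient for $(\GL_m(\R),\OO_m(\R))$, proportional to the zonal polynomial $Z_\mu(MM^T)$, that is, the Jack polynomial at $\alpha=2$ evaluated on the eigenvalues of $MM^T$.

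To finish, combine a sign count with positivity. A nonzero summand requires both $\lambda$ and $\lambda'$ to have all parts even; in that case the fact that each $\lambda_i$ is even forces $\lambda'_{2s-1}=\lambda'_{2s}$ for every $s$, and since each $\lambda'_{2s}$ is itself assumed even one gets $|\lambda|=2\sum_s\lambda'_{2s}\equiv 0\pmod 4$, so every surviving sign is $+1$. The Knop--Sahi theorem \cite{knop-sahi}, which asserts monomial-positivity of the integer form of Jack polynomials, gives $Z_\mu(NN^T)\geq 0$ for every real matrix $N$. Every surviving term is therefore nonnegative, and the $\lambda=\emptyset$ term alone contributes $I_{n-k,\emptyset}(T)\,I_{k,\emptyset}(E)=\vol(\OO_k(\R))\,\vol(\OO_{n-k}(\R))$, producing the desired bound. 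The main obstacle, and the point of departure from \cite{DS}, lies in the third paragraph: in the unitary setting of \cite{DS} the analogous Schur-polynomial integrals can be evaluated directly via Vandermonde determinants, whereas in the orthogonal setting they are genuinely Jack polynomials and one needs the Knop--Sahi positivity.
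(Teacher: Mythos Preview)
Your proposal is correct and follows essentially the same approach as the paper: parametrize the fiber by $\OO_k\times\OO_{n-k}$, expand the determinant via the dual Cauchy identity (the paper phrases this as the decomposition of $\bigwedge^j$ of a tensor product into $\sum_{|\lambda|=j}\rho_{\lambda'}\otimes\rho_\lambda$), identify each single-group integral as the spherical function for $(\GL_m,\OO_m)$ and hence as a Jack polynomial at $\alpha=2$, and invoke Knop--Sahi positivity. Your explicit observation that both $\lambda$ and $\lambda'$ having all parts even forces $|\lambda|\equiv 0\pmod 4$ (so $(-1)^{|\lambda|}=+1$) actually makes precise a step the paper leaves tacit, and your appeal to Littlewood's branching rule is an alternative to the paper's use of the Cartan--Helgason theorem for the same multiplicity-one fact.
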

The proof is given in the following section.

\section{Proof of Theorem \ref{thm:GS}}
\label{sec:ointegrals}

For fixed $g\in\grass nk$ choose $U_0\in \On$ such that $U_0Ag=g$.  Then 
$$\Pi_2^{-1}(g)=\{VU_0:V\in\Ok\times\Onk \},$$
where we continue to identify $\Ok\times\Onk$ with $\Ort g\times\Ort {g^\perp} $.  We have
\begin{align*}
 \int&_{U\in \Pi_2^{-1}(g)}
  \det\left( \Id -D\mathcal{L}_{UA}(g)\right)\, d\Pi_2^{-1}(g)(U) \\
&=
\int_{V\in\Ok\times\Onk}\det(\Id-D\mathcal{L}_{VU_0A}(g))\, d\Pi_2^{-1}(g)(VU_0)\\
 &=\vol\OO_k(\R)\cdot\vol\OO_{n-k}(\R)\\
& \qquad\qquad\times\int_{\psi_1\in\Ok}\int_{\psi_2\in\Onk}
\det(\Id-(\psi_2B_2)\otimes\trnsp(\psi_1B_1)^{-1})\, d\psi_2\,d\psi_1\\
\end{align*}
where $d\psi_1,d\psi_2$ are the Haar measures on $\Ok$ and $\Onk$
The last equality follows from Lemma \ref{lem:jacobian}, with 
$$ B_1=\pi_g((U_0A)|_g)\mbox{ and }
B_2=\pi_{g^\perp}((U_0A)|_{g^\perp})
$$
More generally, for $B_1\in\GL_k(\R), B_2\in\GL_{n-k}(\R)$ we consider the integral of the characteristic polynomial
\begin{equation}
  \label{eq:defI}
\I(B_1,B_2;u)=\int_{\psi_1\in \Ok}\int_{\psi_2\in \Onk}
\det\left(\Id-u(\psi_2B_2)\otimes\trnsp{(\psi_1 B_1)^{-1}}\right)\,d\psi_2\,d\psi_1.
\end{equation}
Therefore Theorem \ref{thm:GS} is equivalent to 
\begin{equation}
  \label{eq:I}
\I(B_1,B_2;1)\geq 1.
\end{equation}
In fact we will prove an explicit formula for the integral, expressing the coefficients of the characteristic polynomial $\I(B_1,B_2;u)$ as polynomials in the squares of the singular values of $B_1$ and $B_2^{-1}$ with positive integer coefficients.

We complete the proof of the Theorem \ref{thm:GS} and inequality \eqref{eq:I} in several steps.  First we use the representation theory of the general linear group to factor the double integral into a linear combination of a product of two integrals over $\On$ and $\Onk$ respectively.  Next each orthogonal group integral is identified with a \emph{spherical polynomial}.  Finally, the theorem follows from an identity between spherical polynomials and Jack polynomials, and a positivity result for the latter due to Knop and Sahi \cite{knop-sahi}.

\subsection{Orthogonal group integrals}

We begin by expanding the characteristic polynomial in the integrand
as a sum of traces:
\begin{equation*}
\det\left(\Id-u(\psi_2 B_2)\otimes\trnsp{(\psi_1 B_1)^{-1}}\right)
=\sum_{j=0}^{k(n-k)} (-u)^j \tr\bigwedge\nolimits^j(\psi_2 B_2\otimes \trnsp(\psi_1 B_1)^{-1}) \\
\end{equation*}
Next,  decompose the exterior powers of the tensor product as
\begin{equation}
  \label{eq:2}
\bigwedge\nolimits^j(\psi_2 B_2\otimes \trnsp(\psi_1 B_1)^{-1}) =
\sum_{\lambda:|\lambda|=j} \rho_{\lambda'}(\psi_2 B_2)\otimes \rho_{\lambda}(\trnsp(\psi_1 B_1)^{-1}),
\end{equation}
where 
\begin{itemize}
\item the sum is over all partitions $\lambda$ of $j$ with at most $k$
rows and $n-k$ columns,
\item  $\lambda'$ is the partition conjugate to
$\lambda$ and
\item $\rho_\lambda, \rho_{\lambda'}$ are the irreducible representations of $\GL_k(\R)$ and $\GL_{n-k}(\R)$ associated to the partitions $\lambda,\lambda',$ respectively.
\end{itemize}
See, for example, Exercise 6.11 of Fulton-Harris \cite{FH}.  
Since the trace of a tensor product of two matrices is the
product of the two traces, we may write
\begin{equation*}
\det\left(\Id-u(\psi_2 B_2)\otimes\trnsp{(\psi_1 B_1)^{-1}}\right)
=\sum_{j=0}^{k(n-k)} (-u)^j\sum_{|\lambda|=j} 
\tr\rho_{\lambda'}(\psi_2 B_2)\cdot\tr\rho_\lambda (\psi_1 B_1) .
\end{equation*}
Integrating over $\OO_k(\R)\times\OO_{n-k}(\R)$ we find that $\I(A,B;u)$ is equal to 
\begin{align} \label{eq:3}
\sum_{j=0}^{k(n-k)}(-u)^j \sum_{|\lambda|=j}&
\left(\int_{\psi_2\in \OO_{n-k}(\R)}\tr\rho_{\lambda'}(\psi_2 B_2)\,d\psi_2\right)\cdot  
\left(\int_{\psi_1\in \OO_{k}(\R)} \tr\rho_{\lambda}(\psi_1 B_1)\,d\psi_1\right). 
\\ \nn
&= 1+ \sum_{\substack{1\leq j\leq k(n-k)\\ |\lambda| = j}}
(-u)^j F_{\lambda'}(B_2)F_{\lambda}(B_1) ,
\end{align}
where for $M\in \GL_N(\R)$ and $\mu$ a partition of $j$ with at most $N$ parts, we define
\begin{equation}
  \label{def:F}
F_\mu(M)=\int_{\psi\in \OO_N(\R)}\tr\rho_\mu(\psi M)\,d\psi
\end{equation}
Theorem \ref{thm:GS} follows from the following more explicit result.

\begin{thm}\label{thm:Fpositivity}
  Let $M\in \GL_N(\R)$ and $\mu=(\mu_1, \ldots, \mu_r)$ with $\mu_1\geq\mu_2\geq\cdots\geq\mu_r>0$ be a partition of $k$ of at most $N$ parts.  
  \begin{enumerate}
  \item If any of the parts $\mu_i$ is odd, then $F_\mu(M)=0.$
  \item If all the parts $\mu_i$ are even, then $F_\mu(M)$ is an even polynomial in the singular values of $M$ with positive coefficients.
  \end{enumerate}
\end{thm}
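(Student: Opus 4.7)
The plan is to exploit the bi-$\OO_N(\R)$-invariance of the integral defining $F_\mu(M)$ to reduce it to a function of the singular values of $M$, recognize that function as a zonal spherical polynomial for the symmetric pair $(\GL_N(\R),\OO_N(\R))$, identify it with a Jack symmetric polynomial at Jack parameter $\alpha = 2$, and finally invoke the positivity theorem of Knop and Sahi \cite{knop-sahi}.

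First I would establish bi-$\OO_N(\R)$-invariance of $F_\mu$. Combining the cyclicity of the trace with the left and right invariance of the Haar measure $d\psi$, one verifies $F_\mu(U_1 M U_2) = F_\mu(M)$ for all $U_1, U_2 \in \OO_N(\R)$. The singular value decomposition $M = U_1 D U_2$ with $D = \diag(s_1, \ldots, s_N)$ then reduces matters to evaluating $F_\mu$ on the diagonal torus. Next, $P_\mu := \int_{\OO_N(\R)} \rho_\mu(\psi)\, d\psi$ is the orthogonal projection onto the subspace of $\OO_N(\R)$-fixed vectors in $\rho_\mu$, so $F_\mu(M) = \tr(P_\mu \rho_\mu(M))$. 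The classical branching rule $\GL_N \downarrow \OO_N$ asserts that the trivial $\OO_N(\R)$-representation appears in $\rho_\mu$ if and only if every part $\mu_i$ is even; hence when some $\mu_i$ is odd, $P_\mu = 0$ and $F_\mu \equiv 0$, which is part (1).

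For part (2), write $\mu = 2\lambda$. The space of $\OO_N(\R)$-fixed vectors in $\rho_\mu$ is then one-dimensional, spanned by a unit vector $v$, and $P_\mu$ is the orthogonal projection onto $\R v$. Therefore
$$F_\mu(M) = \langle v, \rho_\mu(M) v\rangle,$$
and, evaluated at $M = D$, this matrix coefficient is, up to a positive normalization, the zonal spherical polynomial of the Riemannian symmetric pair $(\GL_N(\R), \OO_N(\R))$ associated to $\lambda$, evaluated at $DD^t = \diag(s_1^2, \ldots, s_N^2)$. By the classical theorem of James, Constantine and Macdonald, these zonal spherical polynomials are positive scalar multiples of the Jack symmetric polynomials $P_\lambda^{(\alpha)}$ at $\alpha = 2$; thus
$$F_\mu(M) = c_\lambda \cdot P_\lambda^{(2)}(s_1^2, \ldots, s_N^2)$$
for some $c_\lambda > 0$, which is automatically an even polynomial in $s_1, \ldots, s_N$.

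To conclude, I would invoke the Knop--Sahi theorem \cite{knop-sahi}: the integral form Jack polynomials $J_\lambda^{(\alpha)}$, expanded in the monomial symmetric function basis, have coefficients that are polynomials in $\alpha$ with non-negative integer coefficients. Specializing at $\alpha = 2$ and passing from $J$ to $P$ (a positive rescaling) transfers the positivity to $P_\lambda^{(2)}$, and hence to $F_\mu$. The main technical obstacle is not any single hard step but rather the bookkeeping of the positive normalization constants in the chain $F_\mu \leadsto \langle v, \rho_\mu(\cdot)v\rangle \leadsto \text{zonal spherical polynomial} \leadsto P_\lambda^{(2)} \leadsto J_\lambda^{(2)}$, so that the Knop--Sahi positivity transfers cleanly to $F_\mu$.
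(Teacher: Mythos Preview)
Your proposal is correct and follows essentially the same route as the paper: establish bi-$\OO_N(\R)$-invariance of $F_\mu$, invoke Cartan--Helgason/branching to get vanishing for non-even $\mu$ and to identify $F_\mu$ with the spherical polynomial $\phi_\mu$ when $\mu=2\lambda$, then use the identification $\phi_\mu(g)=J_\lambda^{(2)}(a_1,\ldots,a_N)/J_\lambda^{(2)}(1,\ldots,1)$ in the squared singular values and conclude via Knop--Sahi positivity. The only cosmetic difference is that the paper pins down your constant $c_\lambda$ exactly by computing $F_\mu(e)=\int_K\chi_\mu(k)\,dk=1$ via Schur orthogonality, whereas you argue $c_\lambda>0$ through the unit-vector normalization; both settle the ``bookkeeping'' you flag as the main technical obstacle.
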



\subsection{Spherical polynomials}

The proof of Theorem  \ref{thm:Fpositivity} involves the theory of spherical polynomials for the symmetric space $G/K$ where $G=\GL_N(\R)$ and $K=\OO_N(\R)$, and Jack polynomials. We recall these briefly.

Let $\cP_N$ be the set of partitions with at most $N$ parts, thus
\[ \cP_N=\{\mu \in \Z^N\mid \mu_1\ge \mu_2\ge \cdots\ \geq\mu_N\ge 0\}. \] 

For $\mu \in \cP_N$, let $(\rho_\mu, V_\mu)$ be the corresponding representation of $G$, and let $V_\mu^*$ be the contragredient representation.  A matrix coefficient of $V_\mu$ is a function on $G$ of the form
\[ \phi_{u,v}(M) =\langle u, \rho_\mu(M)v \rangle, \] 
where $u\in V_\mu^*$ and $v\in V_\mu$. We write $\F_\mu$ for the span of matrix coefficients of $V_\mu$. Then $\F_\mu$ is stable under left and right multiplication by $G$, and one has a $G\times G$-module isomorphism
\[  V_\mu^*\otimes  V_\mu \approx \F_\mu,\quad u\otimes v\mapsto \phi_{u,v}.\]

\begin{thm} \label{thm:spherical} Let $\mu$ be a partition in $\cP_N$. Then the following are equivalent
\begin{enumerate}
\item $\mu$ is even, that is, $\mu_i\in 2\Z$ for all $i$.
\item $V_\mu$ has a spherical vector, that is, a vector fixed by $K$.
\item $V_\mu^*$ has a spherical vector.
\item  $\F_\mu$ contains a spherical polynomial $\phi_\mu$, that is, a function satisfying
\[ \phi_\mu(kgk')=\phi_\mu(g), \; g\in G,\; k,k'\in K.\]
\end{enumerate}
The spherical vector $v_\mu$ and spherical polynomial $\phi_\mu$ are unique up to scalar multiple, and the latter is usually normalized by the requirement $\phi_\mu(e)=1$, which fixes it uniquely.
\end{thm}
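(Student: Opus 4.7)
The plan is to treat the four conditions as a web of easy equivalences built around the single substantive equivalence (1) $\Leftrightarrow$ (2), which is the Cartan--Helgason classification for the symmetric pair $(\GL_N(\R),\OO_N(\R))$.

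First I handle the formal equivalences. Compactness of $K$ provides a $K$-invariant positive definite inner product on $V_\mu$, hence a conjugate-linear $K$-isomorphism $V_\mu \cong V_\mu^*$, so $\dim V_\mu^K = \dim (V_\mu^*)^K$; this yields (2) $\Leftrightarrow$ (3). Transporting the $G \times G$-isomorphism $V_\mu^* \otimes V_\mu \cong \F_\mu$ to $(K,K)$-invariants identifies the bi-$K$-invariant subspace of $\F_\mu$ with $(V_\mu^*)^K \otimes V_\mu^K$, so (4) holds iff both (2) and (3) do. Uniqueness of $v_\mu$ and $\phi_\mu$ up to scalar amounts to $\dim V_\mu^K \leq 1$, the classical Gelfand pair property for $(\GL_N(\R),\OO_N(\R))$; I would deduce this from the anti-involution $g \mapsto g^T$, which preserves $K$ and, via the Cartan decomposition $G = KAK$ with $A$ the positive diagonal torus, acts as the identity on $K \backslash G / K$, making the convolution algebra of bi-$K$-invariant functions commutative and forcing $\dim V_\mu^K \leq 1$ by a standard Schur-type argument. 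The normalization $\phi_\mu(e)=1$ then pins $\phi_\mu$ down uniquely.

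The substantive step is (1) $\Leftrightarrow$ (2). For (1) $\Rightarrow$ (2), when $\mu = (2m_1,\ldots,2m_N)$ I would build a $K$-fixed vector inside the Weyl realization of $V_\mu$ as a subquotient of $(\R^N)^{\otimes|\mu|}$ by pairing tensor factors against the $\OO_N$-invariant form $\langle x,y\rangle=\sum_i x_iy_i$ and then applying the Young symmetrizer for $\mu$; the simplest case $\mu=(2m,0,\ldots,0)$ already exhibits the mechanism, since $(x_1^2+\cdots+x_N^2)^m$ is an $\OO_N$-fixed vector in $V_\mu = \mathrm{Sym}^{2m}(\R^N)$. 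For (2) $\Rightarrow$ (1), the cleanest packaging is via Littlewood's restriction rule, which computes the multiplicity of the trivial $\OO_N$-representation in $V_\mu|_{\OO_N}$ as $1$ or $0$ according to whether $\mu$ is even. A more elementary self-contained argument would analyze a $K$-fixed vector $v \in V_\mu$ through its expansion in $A$-weight spaces: the diagonal subgroup $\{\pm 1\}^N \subset K$ acts on the $\nu$-weight space by $\epsilon^\nu$, so $K$-invariance of $v$ forces every weight $\nu$ with $v_\nu \neq 0$ to lie in $(2\Z)^N$, and Weyl-group orbit considerations applied to extremal weights then force $\mu$ itself to be even.

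The main obstacle is the reverse implication (2) $\Rightarrow$ (1): ruling out $K$-fixed vectors for non-even $\mu$ is where the representation theory of the symmetric pair genuinely enters, and I would rely either on Littlewood's branching rule or on the Cartan--Helgason theorem, both classical but non-trivial inputs. Once this is secured, the remaining assertions of the theorem are essentially bookkeeping in the $V_\mu^* \otimes V_\mu \cong \F_\mu$ identification.
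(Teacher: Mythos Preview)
Your approach is essentially identical to the paper's: the paper's entire proof is a one-line citation of the Cartan--Helgason theorem \cite[Theorem~V.4.1]{He}, and you likewise identify Cartan--Helgason (or Littlewood's branching rule) as the substantive input, merely unpacking the formal equivalences $(2)\Leftrightarrow(3)\Leftrightarrow(4)$ and the Gelfand-pair uniqueness argument that the paper leaves implicit. Two minor remarks: over $\R$ the $K$-invariant inner product gives a \emph{linear} (not conjugate-linear) isomorphism $V_\mu\cong V_\mu^*$; and your sketched weight-space argument for $(2)\Rightarrow(1)$ is incomplete as stated, since knowing that every weight supporting a $K$-fixed vector lies in $(2\Z)^N$ does not by itself force the highest weight $\mu$ to be even (e.g.\ $V_{(3,1)}$ for $\GL_2$ has the even weight $(2,2)$), but as you already fall back on Cartan--Helgason this does not affect correctness.
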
 
\begin{proof}
This follows from the Cartan-Helgason theory of spherical representations \cite[Theorem~V.4.1]{He}.
\end{proof}

We now connect the polynomial $F_\mu$ to $\phi_\mu$.
 
\begin{thm}\label{thm:Fphi}
Let $F_\mu(M)$ be as in \eqref{def:F}. If $\mu$ is even then $F_\mu =\phi_\mu$, otherwise $F_\mu=0$.
\end{thm}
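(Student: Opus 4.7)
The plan is to interpret $F_\mu$ through the operator $P_\mu := \int_K \rho_\mu(\psi)\,d\psi$ on $V_\mu$, where $K = \OO_N(\R)$, and recognize it as the orthogonal projection onto the space of $K$-fixed vectors $V_\mu^K$. Left invariance of Haar measure gives $\rho_\mu(k)P_\mu = P_\mu$ for every $k\in K$, so $\mathrm{Im}(P_\mu)\subseteq V_\mu^K$; conversely $P_\mu$ acts as the identity on $V_\mu^K$. Therefore $P_\mu$ is the projection onto $V_\mu^K$. Using the cyclic property of the trace we rewrite
\[
F_\mu(M)=\int_K \tr\bigl(\rho_\mu(\psi)\rho_\mu(M)\bigr)\,d\psi=\tr\bigl(P_\mu\,\rho_\mu(M)\bigr).
\]

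If $\mu$ is not even, then by Theorem \ref{thm:spherical} the space $V_\mu^K$ is zero, so $P_\mu=0$ and $F_\mu\equiv 0$. This handles the first half of the statement.

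Suppose now that $\mu$ is even. By Theorem \ref{thm:spherical}, $V_\mu^K$ is one-dimensional; pick a unit spherical vector $v_\mu\in V_\mu^K$ with respect to a $K$-invariant inner product on $V_\mu$, and let $v_\mu^*\in V_\mu^*$ denote the corresponding dual vector, $v_\mu^*(w)=\langle v_\mu,w\rangle$; this $v_\mu^*$ is the spherical vector of $V_\mu^*$. Then $P_\mu(w)=v_\mu^*(w)\,v_\mu$, and the trace formula above becomes
\[
F_\mu(M)=v_\mu^*\!\bigl(\rho_\mu(M)v_\mu\bigr)=\phi_{v_\mu^*,v_\mu}(M),
\]
which is a matrix coefficient in $\F_\mu$. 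It is $K$-bi-invariant because both $v_\mu$ and $v_\mu^*$ are $K$-fixed: for $k_1,k_2\in K$,
\[
F_\mu(k_1Mk_2)=v_\mu^*\!\bigl(\rho_\mu(k_1)\rho_\mu(M)\rho_\mu(k_2)v_\mu\bigr)=v_\mu^*\!\bigl(\rho_\mu(M)v_\mu\bigr)=F_\mu(M).
\]

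To conclude, we normalize: $F_\mu(e)=\tr(P_\mu)=\dim V_\mu^K=1$. By the uniqueness clause in Theorem \ref{thm:spherical}, the spherical polynomial $\phi_\mu$ is the only $K$-bi-invariant element of $\F_\mu$ with value $1$ at the identity, so $F_\mu=\phi_\mu$. I do not anticipate any real obstacle here; the only delicate point is the correct identification of $P_\mu$ with the projection onto $V_\mu^K$ and the careful bookkeeping between $V_\mu$ and $V_\mu^*$ when expressing $F_\mu$ as a matrix coefficient, both of which are routine applications of Schur-type averaging and the Cartan--Helgason theorem already invoked.
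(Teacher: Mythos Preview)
Your proof is correct and follows essentially the same route as the paper: both arguments show that $F_\mu$ is a $K$-bi-invariant element of $\F_\mu$, invoke the Cartan--Helgason dichotomy of Theorem~\ref{thm:spherical}, and pin down the constant via $F_\mu(e)=\dim V_\mu^K$. The only cosmetic difference is that you package the averaging as the projection operator $P_\mu=\int_K\rho_\mu(\psi)\,d\psi$ and read off $F_\mu$ as the matrix coefficient $\phi_{v_\mu^*,v_\mu}$, whereas the paper stays at the level of the trace character and uses $\chi_\mu(AB)=\chi_\mu(BA)$ together with bi-invariance of Haar measure to get $K\times K$ invariance directly.
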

\begin{proof}
If $\{v_i\},\{u_i\}$ are dual bases for $V_\lambda,V_\lambda^*$ then $\tr \rho_\mu(M)=\sum_i\phi_{u_i,v_i}(M)$, thus the character $\chi_\mu(M) = \tr \rho_\mu(M)$ is an element of $\F_\mu$. Since $\F_\mu$ is stable under the left action of $K$, it follows that $F_\mu(M)=\int_K\chi_\mu(k M)\,dk$ is in $\F_\mu$ as well. 

We next argue that $F_\mu$ is $K\times K$ invariant. For this we compute as follows:
 \[ F_\mu(k_1Mk_2)=\int_K\chi_\mu(kk_1Mk_2)\,dk = \int_K\chi_\mu(k_2kk_1M)\,dk =  F_\mu(M)\] 
Here the first equality holds by definition, the second is a consequence the invariance of the trace character -- $\chi_\mu(AB)=\chi_\mu(BA)$, and the final equality follows from the $K\times K$ invariance of the Haar measure $dk$.

By Theorem \ref{thm:spherical} this proves that $F_\mu$ is a multiple of $\phi_\mu$ if $\mu$ is even, and $F_\mu=0$ otherwise. To determine the precise multiple we need to compute the following integral for even $\mu$ :
\[ F_\mu(e)=\int_K\chi_\mu(k)\,dk.\] 
By Schur orthogonality, this integral is the multiplicity of the trivial representation in the restriction of $V_\mu$ to $K$, which is $1$ if $\mu$ is even. Thus we get $F_\mu=\phi_\mu$, as desired. 
\end{proof} 

\subsection{Jack polynomials}
Jack polynomials $J_\lambda^{(\alpha)}(x_1,\ldots,x_N)$  are a family of symmetric polynomials in $N$ variables whose coefficients depend on a parameter $\alpha$. The main result of \cite{knop-sahi} is that these coefficients are themselves positive integral polynomials in the parameter $\alpha$. 

Spherical functions correspond to Jack polynomials with $\alpha=2$. More precisely, we have
\begin{equation}\label{=J2}
 \phi_\mu(g)= \frac{J_\lambda^{(2)}(a_1,\ldots, a_N)}{J_\lambda^{(2)}(1,\ldots, 1)}, \quad \mu=2\lambda
 \end{equation} 
where $a_1,\ldots,a_N$ are the eigenvalues of the symmetric matrix $g^Tg$; in other words, the $a_i$ are the squares of the singular values of $g$,

We can now finish the proof of Theorem \ref{thm:Fpositivity}.

\begin{proof}[ Proof of Theorem \ref{thm:Fpositivity}]
Part (1) follows from Theorem \ref{thm:Fphi}. Part (2) follows from formula \eqref{=J2} and the positivity of Jack polynomials as proved in \cite{knop-sahi} 
\end{proof}

\subsection{Examples} %
\label{subsec:examples}
We conclude this section with two low rank examples of the characteristic polynomials $\I(A,B;u)$ for $A\in \GL_k(\R), B\in \GL_{n-k}(\R).$  As we may assume $A$ and $B$ are diagonal, let us write
$$A=\diag(a_1,\ldots, a_k)\mbox{ and } B=\diag(b_1, \ldots, b_k).
$$

\subsubsection*{The case $n=4,k=2$}
Here we consider the integral
\begin{equation}
  \label{eq:case22}
\I(A,B;u)=\int_{\psi_1\in \OO_2(\R)}\int_{\psi_2\in \OO_2(\R)}
\det\left(\Id-u(\psi_2B)\otimes\trnsp{(\psi_1 A)^{-1}}\right)\,d\psi_2\,d\psi_1.
\end{equation}
As we are essentially integrating over the circle, it is easy to compute this directly and see that
\begin{equation}
  \label{eq:case22ans}
\I(A,B;u)=1+\frac{\det(B)^2}{\det(A)^2}u^4=1+\frac{b_1^2b_2^2}{a_1^2a_2^2}u^4.
\end{equation}

\subsubsection*{The case $n=6,k=2$}
In this case we use (\ref{eq:3}) to compute
\begin{equation}
  \label{eq:case42}\nn
\I(A,B;u)=\int_{\psi_1\in \OO_2(\R)}\int_{\psi_2\in \OO_4(\R)}
\det\left(\Id-u(\psi_2B)\otimes\trnsp{(\psi_1 A)^{-1}}\right)\,d\psi_2\,d\psi_1
\end{equation}
for $A\in \GL_2(\R),B\in\GL_4(\R).$  Write
$$\I(A,B;u)=1+c_2u^2+c_4u^4+c_6u^6+c_8u^8.
$$
By part 1 of Theorem \ref{thm:Fpositivity} we immediately see that $c_2=c_6=0$ because there are no partitions $\lambda$ of $2$ or $6$ for which both $\lambda$ and its conjugate $\lambda'$ have only even parts.  The only even partition of $k=8$ with at most 2 parts and with even conjugate is $\lambda=(4,4).$  For $V$ the standard two-dimensional representation of $\GL_2(\R)$, we have that $\rho_\lambda(V)=\sym^4(\Lambda^2 V)$
is the fourth power of the determinant representation.  Hence
$$F_\lambda(A^{-1})=\det A^{-4}.$$
Similarly for $W$ the standard four-dimensional representation of $\GL_4(\R)$, the conjugate $\lambda'=(2,2,2,2)$ and $\rho_{\lambda'}(W)=\sym^2(\Lambda^4(W))$ is the square of the determinant.  Hence
$F_{\lambda'}(B)=\det B^{2}$
and
$$c_8=\frac{\det(B)^2}{\det(A)^4}.$$

The only even partition of $k=4$ with even conjugate is $\lambda=\lambda'=(2,2).$  In this case $\rho_\lambda(V)$ is the square of the determinant representation.  The dimension 20 representation $\rho_\lambda(W)$ is a quotient of $\sym^2(\Lambda^2(W))$ with a unique $\OO_4(\R)$-fixed vector, namely, the image of
$$v=(e_1\wedge e_2)^2+(e_1\wedge e_3)^2+(e_1\wedge e_4)^2+(e_2\wedge e_3)^2+(e_2\wedge e_4)^2+(e_3\wedge e_4)^2.
$$
It is readily seen that the trace $\rho_\lambda(B)$ restricted to the span of $v$ is
$$\sum_{1\leq i< j\leq 4} b_i^2b_j^2.
$$
Then, including the normalizing factor of $1/J^{(2)}_{(1,1)}(1,1,1,1)=1/6$ we conclude that 
$$c_4=\frac{\frac16 (b_1^2b_2^2+b_1^2b_3^2+b_1^2b_4^2+b_2^2b_3^2+b_2^2b_4^2+b_3^2b_4^2)}
{a_1^2a_2^2}.$$

\bibliographystyle{alpha}
\bibliography{random_mean_exp}

\end{document}